\let\oldtocsection=\tocsection
\let\oldtocsubsection=\tocsubsection
\renewcommand{\tocsection}[2]{\hspace{0em}\oldtocsection{#1}{#2}}
\renewcommand{\tocsubsection}[2]{\hspace{2em}\oldtocsubsection{#1}{#2}}
\let\oldAA\AA
\renewcommand{\AA}{\text{\normalfont\oldAA}}
\def\part{\partial}
\def\intl{\int\limits}
\def\b{\beta}
\def\Gam{\Gamma}
\def\Om{\Omega}
\def\a{\alpha}
\def\om{\omega}
\def\suml{\sum\limits}
\def\del{\delta}
\def\vp{\varphi}
\def\g{\gamma}
\def\gam{\gamma}
\def\sig{\sigma}
\def\lam{\lambda}
\def\const{{\hbox{\rm const}}}
\def\cos{{\hbox{\rm cos}}}
\def\bbr{{\Bbb R}}
\newtheorem{theorem}{Theorem}[section]
\newtheorem{lemma}[theorem]{Lemma}
\theoremstyle{definition}
\newtheorem{definition}[theorem]{Definition}
\newtheorem{example}[theorem]{Example}
\theoremstyle{remark}
\newtheorem{remark}[theorem]{Remark}
\numberwithin{equation}{section}
\theoremstyle{corollary}
\newtheorem{corollary}[theorem]{Corollary}
\numberwithin{equation}{section}
\newcommand{\be}{\begin{equation}}
\newcommand{\ee}{\end{equation}}
\newcommand{\bea}{\begin{eqnarray}}
\newcommand{\eea}{\end{eqnarray}}
\newcommand{\Bea}{\begin{eqnarray*}}
\newcommand{\Eea}{\end{eqnarray*}}
\def\sideremark#1{\ifvmode\leavevmode\fi\vadjust{\vbox to0pt{\vss
 \hbox to 0pt{\hskip\hsize\hskip1em
\vbox{\hsize2cm\tiny\raggedright\pretolerance10000
 \noindent #1\hfill}\hss}\vbox to8pt{\vfil}\vss}}}%
\begin{document}

\title[Shifted Radon Transform]
{On the Injectivity of the Shifted Funk-Radon Transform and Related Harmonic Analysis}



\author{ B. Rubin}

\address{Department of Mathematics, Louisiana State University, Baton Rouge,
Louisiana 70803, USA}
\email{borisr@lsu.edu}

\subjclass[2010]{Primary 44A12; Secondary 42B15,  	44A15}

\dedicatory{To the memory of Professor Lawrence Zalcman}


\keywords{Spherical means, Radon transforms, Funk-Hecke theorem,  addition formula,  injectivity.}

\begin{abstract}
Necessary and sufficient conditions are obtained for injectivity of the
shifted Funk-Radon transform associated with $k$-dimensional totally geodesic submanifolds of the unit sphere $S^n$ in $\mathbb{R}^{n+1}$.
This result generalizes the well known statement for the spherical means on $S^n$ and is formulated
in terms of  zeros of   Jacobi polynomials. The relevant harmonic analysis is developed,  including a new concept
of induced Stiefel (or Grassmannian) harmonics, the Funk-Hecke type theorems, addition formula, and multipliers. Some perspectives and conjectures are discussed.
 \end{abstract}

\maketitle

\section{Introduction}

Let $X$ be an $n$-dimensional constant curvature space,   $\Xi$ be the set of all $k$-dimensional totally geodesic submanifolds of $X$, $1\le k \le n-1$,  \cite{H11}.
Consider  the Radon type transform
\be\label{Radon}
(R_t f)(\xi)=\!\!\intl_{d(x,\xi)=t}\!\! f(x) dm(x), \qquad x\in X, \quad \xi \in \Xi, \quad t>0,\ee
where  $d (\cdot, \cdot)$ stands for the  geodesic distance on $X$ and  $dm(x)$ is the relevant canonical measure.

{\bf Question.} {\it Suppose that $t>0$ is fixed. How does the injectivity of $R_t$ depend on the values of $t$ and the class of functions $f$?}

 For the  spherical means on $X$, formally corresponding to $k=0$, this problem  was studied by  Berenstein and Zalcman \cite[Section 6]{BZ76}. It
  falls into the scope of the wide class of  Pompeiu's problems. There is an extensive literature related to numerous aspects of the spherical means  and the Pompeiu problem in general; see, e.g.,
 \cite{AQ, Z80, Z92, Z01} and  references therein.

The operator $R_t$ and its dual
\be\label{Radondu}(R^*_t \vp)(x)=\!\!\intl_{d(x,\xi)=t}\!\! \vp (\xi) d\mu (\xi)\ee
are well known  in integral geometry \cite{H11, Ru13b}. Following   Rouvi\`{e}re \cite [p. 19]{Rou}, we call $R_t f$ and $R^*_t \vp$ the   {\it shifted Radon transform} and the {\it shifted dual Radon transform}, respectively.
 The terminology is motivated by the fact that the limiting case $t=0$ yields the well known totally geodesic Radon transform and its dual \cite{H11}.

In the present article we are focusing on the case when $X$ is the unit sphere $S^n$ in  $\bbr^{n+1}$ and call  $R_t f$  the  {\it  shifted Funk-Radon transform}, because this name  is more precise.
 The  functions on $\Xi$ can
be thought of as the  functions on the Grassmann manifold $G_{n+1, k+1}$ of $(k+1)$-dimensional linear subspaces of $\bbr^{n+1}$. Alternatively,  they can be interpreted as right
$O(n-k)$-invariant functions on the Stiefel manifold $V_{n+1, n-k}$ of orthonormal $(n-k)$-frames in $\bbr^{n+1}$.

\vskip 0.2 truecm

\noindent {\bf Main Results.} We invoke the Jacobi polynomials $P_{j/2}^{(\sigma,\rho)}$ \cite {Er} with $j$ even.

\begin{theorem} \label{inje} Let $1\le k \le n-1$, $\sigma=(n-k)/2 -1$, $\rho=(k-1)/2$.

\vskip 0.2 truecm

\noindent {\rm (i)} The  operator  $R_t$   with fixed $t\in (0,\pi/2)$  is injective on $L^1_{even} (S^n)$ if and only if $P^{(\sigma,\rho)}_{j/2} (\cos \,2 t)\neq 0$  for all $j\in \{0,2,4, \ldots\}$.

\vskip 0.2 truecm

\noindent {\rm (ii)} More generally, given a positive integer $\ell$, let  $f\in L^1_{even} (S^n)$ and suppose that $R_{t_i} f=0$ a.e. for all $t_i\in (0,\pi/2)$;  $\;i=1,2,\ldots, \ell$.  If the equations
\be\label {sol}
 P^{(\sigma,\rho)}_{j/2} (\cos \,2 t_i)= 0, \qquad  i=1,2,\ldots, \ell, \ee
 have no common  solution for  $j\!\in \!\{0,2, 4,\ldots\}$, then $f\!=\!0$ a.e. on $S^n$. If these equations have a common solution, say, $j=j_0$, then $R_{t_i} Y_{j_0}=0$ for all $\;i=1,2,\ldots, \ell$ and all spherical harmonics $Y_{j_0}$ of degree $j_0$.
\end{theorem}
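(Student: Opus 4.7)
The plan is to reduce the theorem to a multiplier analysis in the spherical-harmonic expansion, using the Funk-Hecke type machinery and induced Stiefel/Grassmannian harmonics announced in the abstract.

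First, decompose any $f\in L^1_{even}(S^n)$ into spherical harmonics $f\sim\sum_{j\text{ even}} f_j$, with $f_j$ the projection onto the space $\calh_j$ of degree-$j$ harmonics; the evenness of $f$ restricts the sum to even $j$. Because $R_t$ intertwines the natural $O(n+1)$-action on $S^n$ and on $\Xi$, Schur's lemma forces it to act as a scalar multiple of a canonical induction map on each isotypic component: for $Y_j\in\calh_j$ there is a scalar multiplier $m_j(t)$ such that $R_t Y_j = m_j(t)\,Y_j^{\sharp}$, where $Y\mapsto Y^{\sharp}$ is the induction of spherical harmonics to Stiefel/Grassmannian harmonics constructed in the paper.

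Second, I would identify $m_j(t)$ explicitly. Fixing a representative $\xi\in\Xi$, the integral \eqref{Radon} runs over the set of points on $S^n$ at geodesic distance $t$ from $\xi$, which is a tube parameterized by one point in the unit sphere of $\xi$ and one point in the unit sphere of $\xi^{\perp}$. Writing $Y_j$ in bispherical coordinates and using the Funk-Hecke theorem in each factor reduces $m_j(t)$ to a one-dimensional zonal integral. A change of variable $u=\cos 2t$ matches this integral with the classical integral representation of the Jacobi polynomial with parameters $\sigma=(n-k)/2-1$ (the codimension of $\xi$ in $S^n$, shifted) and $\rho=(k-1)/2$ (the dimension of $\xi$, shifted), yielding $m_j(t)=c_{n,k,j}\,P^{(\sigma,\rho)}_{j/2}(\cos 2t)$ with a nonzero normalization constant $c_{n,k,j}$ for every even $j$. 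The passage from degree $j$ to index $j/2$ is characteristic of even-harmonic Funk-type transforms on the sphere.

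Third, the characterization follows almost formally. If $R_t f=0$ then $m_j(t)(f_j)^{\sharp}=0$ in $L^1(\Xi)$ for every even $j$; injectivity of the induction map $Y\mapsto Y^{\sharp}$ on even spherical harmonics (another product of the same harmonic-analytic machinery) upgrades this to $m_j(t)f_j\equiv 0$, so $f_j=0$ whenever $m_j(t)\neq 0$. Part (i) is then precisely the statement that the only such $f$ is $f=0$ iff $m_j(t)\neq 0$ for every even $j$, i.e.\ iff $P^{(\sigma,\rho)}_{j/2}(\cos 2t)\neq 0$ for all even $j$. For part (ii), applying the argument with each $t_i$ gives $f_j=0$ as soon as some $t_i$ satisfies $m_j(t_i)\neq 0$; if the system \eqref{sol} admits no common even root then every $f_j$ must vanish and $f\equiv 0$, while if some even $j_0$ is a common root then $m_{j_0}(t_i)=0$ for all $i$, so every $Y_{j_0}\in\calh_{j_0}$ lies in each $\ker R_{t_i}$.

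The main obstacle is clearly the identification in Step~2: extracting the correct Jacobi polynomial (with exactly the parameters $\sigma,\rho$ and half-index $j/2$) from the tubular integral requires the full Funk-Hecke/addition-formula apparatus for induced harmonics on $\Xi$, together with a careful bookkeeping of normalizations so that the multiplier coefficient $c_{n,k,j}$ is provably nonzero for every even $j$. A secondary but nontrivial ingredient is the injectivity of the induction $Y_j\mapsto Y_j^{\sharp}$ on even spherical harmonics, which must be extracted from the multiplicity analysis of $\calh_j$ as an $O(n+1)$-representation inside $L^2(\Xi)$.
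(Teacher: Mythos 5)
Your plan follows essentially the same route as the paper: reduce $R_t$ to the bispherical mean $M_\tau$ with $\tau=\sin t$, show that on each space of even\nobreakdash-degree spherical harmonics it acts as a scalar multiplier times an induction map into harmonics on $\Xi$, identify that multiplier with a Jacobi polynomial, and read off injectivity (and the multi-radius statement in part (ii)) from the non-vanishing of the multiplier. The logical skeleton of your Step 3 is exactly how the paper concludes.

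Two points, however, need repair or completion. First, in Step 3 you pass from $R_tf=0$ to $m_j(t)(f_j)^{\sharp}=0$ by, in effect, applying $R_t$ termwise to the harmonic expansion of $f$; for $f\in L^1$ that expansion need not converge in any sense that licenses this, so as written the step has a gap. The paper avoids it by dualizing: combining the duality relation \eqref{3.10} of Lemma \ref{Lemma 3.1} with the dual Funk--Hecke identity $(M^*_\tau\hat Y_j)(x)=\hat m_\tau(j)\,Y_j(x)$ of Theorem \ref{gtr}, one gets
$0=\int_{V_{n+1,n-k}}(M_\tau f)\,\hat Y_j\,d_*v=\hat m_\tau(j)\int_{S^n}f\,Y_j\,d_*x$,
so the hypothesis kills every Fourier--Laplace coefficient of $f$ for which $\hat m_\tau(j)\neq0$; only the action of $M_\tau$ and $M^*_\tau$ on smooth harmonics is ever used, never on $f$ itself, and the ``injectivity of the induction $Y_j\mapsto Y_j^{\sharp}$'' you list as a secondary ingredient is then not needed for this direction (for the converse one needs only that $\hat m_\tau(j_0)=0$ forces $M_\tau Y_{j_0}\equiv0$, which is Theorem \ref{Theorem 4.1a}). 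Second, the identification of the multiplier, which you correctly flag as the main obstacle, is left as a sketch; it is the actual content of the paper's harmonic-analytic machinery (expansion of $Y_j(r_vx)$ in the bispherical basis ${\mathscr U}$, the observation that integration over $S^k\times S^{n-k-1}$ annihilates all terms except $r=s=0$, and the explicit nonzero constant $\varkappa_j$). Note in particular the parameter transposition: the computation in the variable $\tau=\sin t$ naturally produces $R^{(\rho,\sigma)}_{j/2}(2\sin^2t-1)$, and one must invoke $P^{(\rho,\sigma)}_m(-x)=(-1)^mP^{(\sigma,\rho)}_m(x)$, i.e.\ \eqref{tre}, to arrive at the polynomial $P^{(\sigma,\rho)}_{j/2}(\cos 2t)$ appearing in the statement; your Step 2 writes the $(\sigma,\rho)$ form directly, so this bookkeeping must be checked rather than assumed.
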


This theorem agrees with the known  case $k=n-1$ (cf. \cite[Theorem 8]{BZ76}), when the Jacobi polynomial $P_{j/2}^{(\sigma,\rho)}$ can be written as   the  Gegenbauer polynomial $C_j^{(n-1)/2}$  with transformed argument; use, e.g.,  \cite[formula 10.9 (21)]{Er}.

\begin{corollary}\label{Ungar}  The set of all $t\in (0,\pi/2)$, for which $R_t$ is non-injective on $L^1_{even} (S^n)$,  is everywhere dense in $(0,\pi/2)$ and so is the set  for which it does.\end{corollary}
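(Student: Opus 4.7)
The plan is to deduce the statement directly from Theorem \ref{inje}(i) together with classical density properties of zeros of Jacobi polynomials. By Theorem \ref{inje}(i), if we set
\[
\caln=\{t\in (0,\pi/2):\, R_t \text{ is not injective on } L^1_{even}(S^n)\},
\]
then $\caln$ equals the preimage under the homeomorphism $\Phi:t\mapsto \cos 2t$ of $(0,\pi/2)$ onto $(-1,1)$ of the set $Z=\bigcup_{m\ge 1} Z_m$, where $Z_m\subset(-1,1)$ is the zero set of $P^{(\sigma,\rho)}_m$. Note that the index $j=0$ contributes nothing, since $P^{(\sigma,\rho)}_0$ is a nonzero constant. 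It therefore suffices to prove that both $Z$ and $(-1,1)\setminus Z$ are dense in $(-1,1)$.

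For the density of $Z$, I would first observe that $\sigma=(n-k)/2-1>-1$ and $\rho=(k-1)/2\ge 0$ under the hypothesis $1\le k\le n-1$, so $\{P^{(\sigma,\rho)}_m\}$ is orthogonal on $(-1,1)$ with respect to the positive weight $(1-x)^{\sigma}(1+x)^{\rho}$. Hence each $P^{(\sigma,\rho)}_m$ has $m$ simple zeros in $(-1,1)$, and by the classical asymptotic formula for Jacobi zeros (see, e.g., Szeg\H{o}, \emph{Orthogonal Polynomials}, Ch.~8), the $i$-th zero is of the form $\cos\theta_{m,i}$ with $\theta_{m,i}=i\pi/m+O(1/m)$ as $m\to\infty$. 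Consequently every open subinterval of $(-1,1)$ contains a zero of $P^{(\sigma,\rho)}_m$ for all sufficiently large $m$, which gives density of $Z$. Pulling back by $\Phi^{-1}$ yields density of $\caln$ in $(0,\pi/2)$.

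For the complementary set, note that each $Z_m$ is finite and the union over $m\ge 1$ is countable, so $\caln$ itself is a countable subset of the uncountable interval $(0,\pi/2)$. The complement of any countable set in an open interval of $\bbr$ is dense, so $(0,\pi/2)\setminus\caln$ is dense as well. This would complete the proof. No substantive obstacle is anticipated: the entire argument reduces to the standard equidistribution of zeros of Jacobi polynomials with parameters in the admissible range, together with an elementary cardinality argument for the injective set.
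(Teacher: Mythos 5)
Your proposal is correct and takes essentially the same route as the paper: the paper's own (one-line) proof likewise combines the density of zeros of orthogonal polynomials (citing Szeg\H{o}, Theorem 6.1.1, where you instead invoke the explicit equidistribution asymptotics for Jacobi zeros) with the countability of the full zero set to get density of the complementary, injective set. Your added details --- the reduction via Theorem \ref{inje}(i), the homeomorphism $t\mapsto\cos 2t$, the check that $\sigma,\rho>-1$, and the exclusion of $j=0$ --- are all sound.
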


This statement follows from the density property of zeros  of orthogonal polynomials (see, e.g., \cite[Theorem 6.1.1]{Sz}) and the fact that the set of all such  zeros  is countable.
It mimics the celebrated  Ungar's freak theorem for spherical caps in $S^2$; see also   Schneider \cite{Schn1, Schn2} and    Berenstein
and  Zalcman \cite{BZ76} regarding the similar statement for hyperplane  sections of $S^n$,  $n\ge 2$.

\begin {remark} Theorem \ref{inje} gives no answer about  injectivity of $R_t$ for particular values of $t \in (0,\pi/2)$, say, $t=\pi/5$ or  $t =\pi/6$. It only reformulates the problem in a different  language.
However, this reformulation is very important. For instance, it allows one to invoke the tools of number theory and  asymptotic properties of Jacobi polynomials for further invesstigation. Some results in this direction for $k=n-1$ and associated Legendre functions can be found  in \cite{Ru00a}, \cite[Section 5.5]{Ru15}.
\end{remark}

To prove Theorem \ref{inje}, we introduce a new concept of   induced Stiefel harmonics  on $V_{n+1, n-k}$.
 These harmonics are right $O(n-k)$-invariant, constitute an orthonormal system, and can  be regarded as harmonics on the Grassmann manifold $G_{n+1, k+1}$. They
are generated by the usual spherical harmonics on $S^n$.  We prove the  addition formula for such harmonics and
  establish  new Funk-Hecke type theorems for $O(n+1)$-intertwining
operators, which connect functions on $S^n$ with functions on $V_{n+1, n-k}$ (or $G_{n+1, k+1}$). These theorems
  provide explicit formulas for the relevant  Fourier-type multipliers. The Jacobi polynomials are the main ingredients of these formulas.

 The developed harmonic analysis is applicable not only to the operators
 (\ref{Radon}) and (\ref{Radondu}) but also to the  Funk-Radon transforms (the case $t=0$) and to the more general analytic
 families of generalized cosine transforms  in integral geometry \cite {Ru02b, Ru08}; see examples in Section \ref{Examples}.

   Section 2 contains preliminaries. In Section 3 we prove the main results. More comments can be found in Conclusion, also containing   some thoughts about
    possible developments in the future.

\section{ Preliminaries}

\subsection{ Notation}
${}$

\noindent In the following, $\bbr^{n+1}$, $n\ge 2$, is the real $(n+1)$-dimensional Euclidean space  with the coordinate unit vectors $e_1, \ldots, e_{n+1}$;
$S^n \subset \bbr^{n+1}$ is the $n$-dimensional unit sphere with the area $\sigma_n = 2 \pi^{(n+1)/2}\big/\Gamma\left((n+1)/2\right)$. The points in $\bbr^{n+1}$ will be identified with the relevant column vectors.

For  $x\in S^{n}$, we write $dx$ for the surface area measure on $S^n$ and set $d_*x = \sigma_n^{-1} dx$ for the corresponding normalized
  measure.

Given an integer $k$,  $1\le k \le n-1$,   let
$V_{n+1, n-k}$ be the Stiefel manifold  of orthonormal  $(n-k)$-frames in $\bbr^{n+1}$. Every element $v\in V_{n+1, n-k}$ is an  $(n+1)\times (n-k)$  matrix  satisfying
$v^T v=I_{n-k}$, where $v^T$ is the transpose of $v$ and $I_{n-k}$ is the identity $(n-k)\times (n-k)$ matrix.
 We equip $V_{n+1, n-k}$ with the standard probability measure $d_*v$,  which is left $O(n+1)$-invariant and right $O(n-k)$-invariant. Given $v\in V_{n+1, n-k}$, we denote by $v^\perp$ the
 $(k+1)$-dimensional linear subspace of $\bbr^{n+1}$ orthogonal to $v$. The  Grassmann manifold  of all such subspaces will be denoted by  $G_{n+1, k+1}$.
 If $x\in S^n$, $v\in V_{n+1, n-k}$, and  $\{ v \}$ is the $(n-k)$-plane  spanned by $v$, then  $|x^T v|$
 is the length of the orthogonal  projection of $x$ onto  $\{ v \}$. The notation $x\cdot y=x_1 y_1 +\cdots + x_{n+1} y_{n+1}$ for the vectors $x,y \in \bbr^{n+1}$  is standard.

Let $\Xi$ be the set of all $k$-dimensional totally geodesic submanifolds $\xi$ of $S^n$ ($k$-geodesics, for short) equipped with the canonical  $O(n+1)$-invariant probability measure $d_*\xi$.
Every  $\xi  \in \Xi$ has the form  $\xi = S^{n} \cap v^\perp$ for some $v\in V_{n+1, n-k}$.

The group $O(n+1)$ of orthogonal transformations of $\bbr^{n+1}$ and all subgroups of $O(n+1)$ will be equipped with the corresponding Haar measure of total mass one.

In the following, $v_0 =\left[\begin{array}{c} 0 \\ I_{n-k}
\end{array} \right]\in  V_{n+1, n-k}$ denotes the coordinate frame. If $v \in  V_{n+1, n-k}$ and  $r_v \in  O(n+1)$  maps $v_0$ to $v$, we set $f_v(x) = f(r_vx)$.
 Similarly, if $x \in S^n$  and $r_x  \in  O(n+1)$  maps $e_{n+1}$ to $x$, we denote $\vp_x (v) =\vp(r_x v)$.

We say that  an integral under consideration
 exists in the Lebesgue sense if it is finite when the integrand is replaced by its absolute value.

\subsection{Bispherical means}

In this section we give precise meaning to the shifted Funk-Radon transform $R_t$ on $S^n$ and its dual  $R_t^*$. The operator $R_t$
will be realized as a certain bispherical mean associated with bispherical coordinates  in $S^n$. We also recall some known facts about spherical harmonics and their representation in bispherical coordinates.

Let
\[\bbr^{n+1} = \bbr^{k+1} \times \bbr^{n-k},\qquad 1\le k \le n-1.\]
\be\label {lazx}  \bbr^{k+1} = \bbr e_1 \oplus \ldots \oplus \bbr e_{k+1}, \qquad \bbr^{n-k} = \bbr e_{k+2}\oplus \ldots \oplus \bbr e_{n+1}.\ee
 Every point $x\in S^n$ can be represented  as
\be\label {2.9} x= \eta \sin \theta + \zeta \cos \,\theta, \ee
where
\[\eta\! \in\! S^k\! \subset\! \bbr^{k+1}, \qquad \zeta \! \in\! S^{n-k-1} \!\subset\! \bbr^{n-k}, \qquad 0 \!\le\!\theta \!\le\! \pi/2,\]
\[ dx=\sin^k\theta \; \cos^{n-k-1}\theta \;  d\eta d\zeta d\theta, \]
 $dx, d\eta, d\zeta$ being the corresponding non-normalized surface area measures; see, e.g., \cite [pp. 12, 22]{VK}. We
 recall that the relevant normalized measures are denoted by $d_* x, d_* \eta, d_* \zeta$.

  The variables $(\eta, \zeta, \theta)$ are called the {\it bispherical coordinates} of $x$.

Let $v_0 =\left[\begin{array}{c} 0 \\ I_{n-k}
\end{array} \right]\in  V_{n+1, n-k}$ be the coordinate frame, $r_v$ be a rotation mapping $v_0$ to $v \in  V_{n+1, n-k}$, $f_v(x) = f(r_vx)$.
Consider the integral
\be\label {3.4}  (M_{{\rm cos}\,\theta} f)(v) = \intl_{S^{n-k-1}} d_*\zeta \intl_{S^k} f_v (\eta \sin \theta + \zeta \cos \,\theta) d_*\eta\ee
or (set $\tau= \cos \,\theta$)
\be\label {3.4z}  (M_\tau f)(v) = \intl_{S^{n-k-1}} d_*\zeta \intl_{S^k} f_v (\eta \sqrt{1-\tau^2} + \zeta \tau) d_*\eta.\ee
We call $ (M_\tau f)(v)$ the {\it  bispherical mean of $f$ in the direction of  $ v \in V_{n+1, n-k}$ at the level $\tau$.} One can also write
\be\label{3.23} (M_\tau f)(v) =  \intl_{| x^T v| = \tau} f(x) d_\tau x \ee
(see Notation), where $d_\tau x$ stands for the corresponding probability measure.

The integral  (\ref{3.4}) gives precise meaning to the shifted Funk-Radon transform  (\ref{Radon}). Specifically,
\be\label {cdf}  (M_{{\rm cos}\,\theta} f)(v) =  \!\intl_{d(x, \xi)=t} \!\! f(x)\, dm(x)\equiv(R_t f)(\xi),\ee
\[
 t= \frac{\pi}{2}- \theta,\qquad \xi=S^n \cap v^\perp\in \Xi.\]
By (\ref{2.9}), for any  $v \in  V_{n+1, n-k}$ we have
\be\label {3.5} \intl_{S^n} f(x) d_*x = \frac{\sigma_{n-k-1} \sigma_k}{\sigma_n} \intl^{\pi/2}_0  (M_{{\rm cos}\,\theta} f)(v) \sin^k\theta \,\cos^{n-k-1} \theta \,d \theta. \ee

To define the dual  of $(M_\tau f)(v)$, we first write
\bea \label {3.6} \qquad \quad (M_\tau f)(v)\!\! &=& \!\!\!\intl_{SO(n-k)} \!\!\! d\a\!\!\intl_{SO(k+1)} \!\!\!f_v(\a e_{n+1} \cos \,\theta + \b e_{k+1}\sin \theta)\,d\b\qquad \\
\label {3.7}\!\!&=&\!\!\intl_{K'} f_v(ge_{n+1}) d\g, \qquad g = \g g_{k+1, n+1} (\theta)\delta, \eea
 where $K'=SO(n-k)\times SO(k+1), \; \; \delta \in K = SO (n)$, $\tau = \cos\,\theta$, and
$g_{k+1, n+1} (\theta)$ is a rotation in the plane $(e_{k+1}, e_{n+1})$ with the matrix
\[
\left[\begin{array}{cc}\cos\theta & \sin\theta \\
                              -\sin\theta & \cos \theta
\end{array} \right].\]
Clearly, $g_{k+1, n+1} (\theta) e_{n+1} = e_{n+1}\cos \,\theta + e_{k+1} \sin \theta$. We define
\be\label {3.8}  (M^*_\tau \vp)(x) = \intl_{K'\times K} \vp_x (g^{-1}    v_0) d \gamma d \delta,  \ee
where $g $ has the same meaning as in (\ref{3.7}), $\vp_x (v) =\vp(r_x v)$, $r_x$ is a rotations which maps $e_{n+1}$ to $x$.

If $\vp $ is a right $O(n-k)$-invariant function on $V_{n+1, n-k}$, which is interpreted as a function of $\xi \in \Xi$, then, abusing notation, we can write  (\ref{3.8}) as
\be\label {cdfa}
(M^*_{{\rm cos}\,\theta}  \vp)(x) =  \!\intl_{d(x, \xi)=t} \!\! \vp (\xi) d\mu (\xi)\equiv(R^*_t \vp)(x), \qquad t= \frac{\pi}{2}- \theta.\ee
 This integral gives precise meaning to the shifted dual Funk-Radon transform  (\ref{Radondu}).

We recall that  $(R_t f)(\xi)$  and $(R^*_t \vp)(x)$ with $t=0$ are the usual
 Funk-Radon transforms  \cite{H11, Rou, Ru02b}, so that
\be\label {cdfae}
(M_0 f)(v)\!= \!(Rf)(S^n \cap v^\perp)\!=\!(Rf)(\xi), \quad (M^*_0 \vp)(x)\!=\! (R^* \vp)(x), \ee
if we identify $O(n-k)$-invariant functions $\vp$ on  $V_{n+1, n-k}$ with functions on $\Xi$.

\begin{lemma}\label {Lemma 3.1}  For any $\tau \in [0, 1]$,
\be\label {3.10}
\intl_{V_{n+1, n-k}} (M_\tau f)(v) \vp (v) d_*v = \intl_{S^n} f(x) (M^*_\tau \vp)(x) d_*x, \ee
provided that either side of this equality exists in the Lebesgue sense.
\end{lemma}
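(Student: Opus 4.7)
The plan is to lift both sides to Haar integrals over $O(n+1)$ and exploit its bi-invariance. As a preliminary step, observe that in (\ref{3.7}) the factor $\delta\in K=SO(n)$ fixes $e_{n+1}$, so $f_v(ge_{n+1})$ does not depend on $\delta$; we may therefore insert a trivial average over $\delta$ and write
\[
(M_\tau f)(v) \;=\; \intl_{K'\times K} f(r_v\, g\, e_{n+1})\, d\gamma\, d\delta,\qquad g=\gamma\, g_{k+1,n+1}(\theta)\,\delta.
\]

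Multiplying by $\varphi(v)$, integrating over $V_{n+1,n-k}$, and lifting $d_*v$ to the normalized Haar measure $dr$ on $O(n+1)$ via $v=r v_0$ (well-defined modulo the stabilizer of $v_0$, which the integrand respects), Fubini rewrites the left-hand side as
\[
\intl_{K'\times K}\intl_{O(n+1)} f(rg\,e_{n+1})\,\varphi(rv_0)\,dr\,d\gamma\,d\delta.
\]
The bi-invariant substitution $r\mapsto rg^{-1}$ then yields
\[
\intl_{K'\times K}\intl_{O(n+1)} f(r\,e_{n+1})\,\varphi(r\,g^{-1}v_0)\,dr\,d\gamma\,d\delta.
\]

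Next I parametrize $r=r_x h$ with $x=re_{n+1}\in S^n$ and $h\in K$, so that $dr=dh\,d_*x$. The integrand becomes $f(x)\,\varphi\bigl(r_x\,h\,\delta^{-1}g_{k+1,n+1}(\theta)^{-1}\gamma^{-1}v_0\bigr)$. Right-invariance of Haar measure on $K$ permits a substitution in the $d\delta$-integration that turns $h\delta^{-1}$ into $\delta^{-1}$, eliminating $h$ from the integrand. The $dh$-integration then drops out since $K$ has unit total mass, and comparison with (\ref{3.8}) identifies the remaining inner integral against $d\gamma\,d\delta$ as $(M_\tau^*\varphi)(x)$. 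This produces the desired duality.

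The principal difficulty is the bookkeeping in the final step: the $K$-average over $h$ produced by the parametrization $O(n+1)\simeq S^n\times K$ must be folded cleanly into the $K$-integration over $\delta$ already present in the definition of $M_\tau^*$, so that the integrand matches (\ref{3.8}) exactly. Everything else is standard Fubini, justified by the Lebesgue integrability hypothesis and the finite total masses of $K$, $K'$ and $O(n+1)$.
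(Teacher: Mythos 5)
Your argument is correct and follows essentially the same route as the paper's proof: lift both integrals to the Haar measure on the rotation group, use the representation (\ref{3.7}) of $M_\tau$ with a trivially inserted $\delta$-average, translate by $g^{-1}$ using bi-invariance, and recognize the resulting inner integral as (\ref{3.8}). Your explicit handling of the stabilizer factor $h\in K$ in the parametrization $r=r_xh$ (absorbing it into the $\delta$-integration by right-invariance) is a point the paper passes over silently, but it is the same computation.
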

\begin{proof} This duality statement is well know in the general double fibration context \cite{H11}. For the sake of completeness, we present its proof in the Stiefel terms.
Let $G=SO (n+1)$, $\tau = \cos \,\theta$. By (\ref{3.7}),
\bea I&=&\intl_{V_{n+1, n-k}} (M_\tau f)(v) \vp (v) d_*v = \intl_G (M_{\tau} f)(gv_0) \vp (gv_0) dg  \nonumber\\
&=&\intl_{K'} d\gamma \intl_G \vp (gv_0) f (g\gamma g_{k+1, n+1} (\theta) \delta e_{n+1}) dg. \nonumber\eea
Now we change the notation  $g\g g_{k+1, n+1} (\theta)\delta \to g$, then integrate in $\delta \in K$, and change the order of integration. This gives
\bea I&=&\intl_G f(ge_{n+1}) dg \intl_{K'\times K} \vp(g\delta^{-1} [g_{k+1, n+1} (\theta)]^{-1} \gamma^{-1} v_0)d\gamma d\delta  \nonumber\\
&=&\intl_G f(ge_{n+1}) (M^*_{\tau} \vp) (g e_{n+1}) dg,  \nonumber\eea
 which implies (\ref {3.10}).
 \end{proof}

\subsection {Spherical harmonics in bispherical coordinates}
${}$

\vskip 0.2 truecm

\noindent {\bf 1.}
Let ${\mathscr Y}=\{ Y^n_{j, \lambda} (x)\}$ be an orthonormal basis  of spherical harmonics in $L^2 (S^n)$.
Here $j \in \{ 0, 1, \dots\}$,  $\; \lambda \in \{ 1, 2, \dots, d_n(j)\}$;
\be\label {2.8} d_n(j) =  (n+2j-1) \, \frac{\Gam (n+j-1)}{\Gam (j+1)\,\Gam (n) }\ee
 is the dimension of the subspace of spherical harmonics of degree $j$. Thus
 \be\label {tagw}
 \intl_{S^n} Y^n_{j, \lam} (x)  Y^n_{j', \lam'} (x)\, d_*x =\left\{ \begin{array} {ll} 1
 &\;\;\hbox{\it if} \; \; j = j' \; \; \hbox{\it and} \; \; \lam = \lam',\\ 0 &\; \; \hbox{\it otherwise}\\
\end{array}
\right.\ee
(it is important to keep in mind that normalization of the spherical harmonics throughout the paper is understood with respect to the probability measure $d_* x$, not with respect to the surface area measure $dx$).

 If $Y_j$ is  a spherical harmonic  of degree $j$,   $\Om (t) (1\!-\!t^2)^{n/2-1} \!\in L^1 (-1,1)$, then,  by the  Funk-Hecke theorem,
\be\label {FH} \intl_{S^n}\! \Om (x\cdot y) Y_j (x) d_*x= \om_j Y_j (y), \ee
where \[ \om_j=  \frac{\sig_{n-1}}{\sig_n} \! \intl^1_{-1}\! P_{j} (t) \Omega (t) (1-t^2)^{n/2-1} dt,\]
\be\label {als} P_{j} (t)= \displaystyle{\frac{j!\, (n-2)!}{ (j+n-2)!}}\, C^{(n-1)/2}_{j} (t), \ee
 $C^{(n-1)/2}_{j} (t)$ being the Gegenbauer polynomial. The polynomials (\ref{als}) are called the {\it spherical  polynomials} (other names are also  known) and enjoy the following properties:
\be\label {shd1}  P_j(1)=1;\ee
\be\label {shap31} \intl_{-1}^1 P_j(t)\,
P_{j'}(t)\, (1-t^2)^{n/2-1} dt=
\begin{cases}
0,  & \text {\rm if} \; j\neq j',\\
\displaystyle {\frac{\sig_{n}}{d_n (j)\,\sig_{n-1} },}  &\text {\rm if} \; j=j';
\end{cases}
\ee
\be \label {hva34} \sum\limits_{\lam =1}^{d_n(j)} Y^n_{j, \lambda}
(x)\, Y^n_{j, \lambda}  (y)= d_n(j)\,P_j(x\cdot y).\ee

The  reader is referred to \cite[Section A.6]{Ru15}, where these statements are proved in slightly different notation.

\vskip 0.2 truecm

{\bf 2.}
We will need representation of spherical harmonics in the bispherical coordinates
\[ x= \eta \sin \theta + \zeta \cos \theta,\]
\[\eta\! \in\! S^k\! \subset\! \bbr^{k+1}, \qquad \zeta \! \in\! S^{n-k-1} \!\subset\! \bbr^{n-k}, \qquad 0 \!\le\!\theta \!\le\! \pi/2;\]
cf. (\ref{2.9}).
  Let $P_m^{(\rho,\sigma)} (t)$, $m \in \{0,1,2, \ldots \}$, be the  Jacobi polynomials; $ \rho, \sig > -1$. The corresponding normalized polynomials are defined by
\be\label {2.10} R^{(\rho,\sigma)}_m(t) = \, \frac{P^{(\rho, \sigma)}_m (t)}{P_m^{(\rho,\sigma)} (1)}, \qquad P^{(\rho,\sigma)}_m (1)  =\frac{ \Gamma(m+\rho + 1)}{m!\, \Gamma (\rho + 1)}. \ee
We recall  that
\bea\label {2.11}  &&\intl^1_{-1} [R^{(\rho,\sigma)}_m (t) ]^2 (1-t)^\rho (1+t)^\sigma dt\\
&&=\frac{2^{\rho+\sigma+1}m! \;
\Gamma^2(\rho+1) \Gamma (m+\sigma+1)}{(2m+\rho + \sigma+1) \Gamma (m+\rho+1) \Gamma (m+\rho +\sigma+1)}\nonumber\eea
and
\be\label {ced}
\intl^1_{-1} R^{(\rho,\sigma)}_\ell (t) \,R^{(\rho,\sigma)}_m (t)  (1-t)^\rho (1+t)^\sigma dt=0, \qquad \ell\neq m;\ee
cf.  \cite[pp. 300, 301]{AAR}.  Note also  (see, e.g., \cite[formula 10.9 (21)]{Er})  that
\be\label{kin}
R^{(n/2 -1, -1/2)}_{m} (2t^2 -1) =P_{2m} (t), \ee
$P_{2m}$ being the spherical polynomial (\ref{als}) of degree $j=2m$.

Let $\{ Y^k_{r, \mu}(\eta)\} $ and $\{ Y^{n-k-1}_{s,\nu} (\zeta)\} $ be orthonormal bases of spherical harmonics in $L^2 (S^k)$ and $L^2 (S^{n-k-1})$, respectively. Here
\[
r,s=0, 1, \dots;\qquad  \mu = 1, \dots, d_{k+1} (r); \qquad  \nu = 1, \dots, d_{n-k} (s);\]
cf. (\ref {2.8}). We set
$$\rho = r+(k-1)/2, \qquad \sigma = s+ (n-k)/2 -1,  $$
and consider the collection of functions
\be\label {2.12} U^j_M (x) = \varkappa_M \,Y^k_{r,\mu} (\eta)\, Y^{n-k-1}_{s,\nu} (\zeta)\, \sin^r\theta\, \cos^s\theta \,R^{(\rho, \sigma)}_m (\cos 2\theta),\ee
indexed by $M = (r, \mu; s, \nu; m)$ with $j = 2m+r+s$ and
\be\label {2.13} \varkappa^2_M=\frac{2\sigma_n\,(2m+\rho +\sigma +1)\; \Gamma\; (m+\rho+1) \; \Gamma \; (m+\rho +\sigma + 1)}{\sigma_{n-k-1} \sigma_k\,m!\; \Gamma \, (m+\sigma + 1)\; \Gamma^2\, (\rho + 1)}. \ee
 Each $U^j_M (x)$ is a spherical harmonic of degree $j$. We denote by  ${\mathscr U}$
  the collection   of all harmonics (\ref{2.12}). One can show \cite  [pp. 208 - 211] {VK}
  that ${\mathscr U}$ is an orthonormal basis in $L^2(S^n)$.

For convenience of the reader, let us check, for instance, that
\be\label {loi} ||U^j_M ||_{L^2 (S^n)}=1.\ee
 Passing to bi-spherical coordinates (\ref{2.9}) and taking into account normalization, we have
\bea
 ||U^j_M ||^2_{L^2 (S^{n})}&=& \intl_{S^n} |U^j_M (x)|^2\, d_*x \nonumber\\
&=&\varkappa^2_M \, \frac{\sigma_{n-k-1} \sigma_k}{\sigma_n}  \intl_{S^k} |Y^k_{r,\mu} (\eta)|^2  d_*\eta  \intl_{S^{n-k-1}} |Y^{n-k-1}_{s,\nu} (\zeta)|^2 d_*\zeta \nonumber\\
&\times& \intl_0^{\pi/2} \sin^{2r+k} \theta\, \cos^{2s+ n-k-1}\theta \,|R^{(\rho, \sigma)}_m (\cos 2\theta)|^2  d\theta\nonumber\\
&=&\varkappa^2_M\, \frac{\sigma_{n-k-1} \sigma_k}{\sigma_n}\, I;\nonumber\eea
\[ I=\intl_0^{\pi/2} \sin^{2r+k} \theta\, \cos^{2s+ n-k-1}\theta |R^{(\rho, \sigma)}_m (\cos 2\theta)|^2  d\theta.\]
Changing variables and using (\ref{2.11}), we obtain
\[ I= \frac{ m! \;
\Gamma^2(\rho+1) \Gamma (m+\sigma+1)}{2(2m+\rho + \sigma+1) \Gamma (m+\rho+1) \Gamma (m+\rho +\sigma+1)}.\]

Both orthonormal bases   ${\mathscr Y}=\{ Y^n_{j, \lambda} (x)\}$ and ${\mathscr U}=\{U^j_M (x)\}$  will be needed in the next sections.

\section{Intertwining Operators. Main results}

Consider a dual pair of integral operators of the form
\bea \label {3.1} (Af)(v) &=& \intl_{S^n} a(|x^T  v|) \,f(x) \,d_*x, \qquad v \in V_{n+1, n-k},\\
\label {3.2}({A^*}\vp)(x) &=& \intl_{V_{n+1, n-k}} \!\!\! a(|x^T  v|) \, \vp (v) \,d_*v, \qquad x \in S^n,\eea
which intertwine the action of the orthogonal group $O(n +1)$ on $S^n$ and $V_{n+1, n-k}$.  Here $1 \le k \le n-1$ and $a$ is a function on $[0, 1]$.

Clearly, $(Af)(v\g) = (Af)(v)$ for all $\g \in O(n-k)$, and therefore $(A f) (v)$ can be viewed as a function on the  Grassmannians or on the space of $k$-geodesics. Specifically,
\be\label {3.3}  (Af)(v) \equiv \left\{ \begin{array} {ll}  (A_1 f)(v^\perp), \quad &v^\perp \in G_{n+1, k+1},\\
(A_2 f)(\{ v \}), \quad &\{ v \} \in G_{n+1, n-k},\\
(A_3 f)(S^n \cap v^\perp), \quad & S^n \cap v^\perp \in \Xi.
\end{array} \right. \ee

\begin{lemma}\label{Lemma 3.2}  Suppose that the integrals (\ref{3.1}) and (\ref{3.2}) are absolutely convergent.  Then
\bea \label {3.11} (Af)(v) &=& c_{n, k} \intl^1_0  a(\tau) (M_\tau f)(v) \,\rho (\tau)\, d\tau,\\
 \label {3.12}
 (A^*\vp) (x) &=& c_{n, k} \intl^1_0 a(\tau) (M^*_\tau \vp)(x) \,\rho (\tau)\,d\tau, \eea
where
 \be\label {3.13} \rho (\tau) = (1-\tau^2)^{(k-1)/2} \tau^{n-k-1},\qquad   c_{n,k}=\frac{\sigma_{n-k-1} \sigma_k}{\sigma_n}.\ee
\end{lemma}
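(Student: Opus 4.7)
The plan is to establish (\ref{3.11}) by reducing to the case $v=v_0$ and decomposing $d_*x$ in bispherical coordinates on $S^n$; the dual identity (\ref{3.12}) will then follow by combining (\ref{3.11}) with the duality formula of Lemma \ref{Lemma 3.1}.

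For (\ref{3.11}), I would first observe that the operator $A$ is $O(n+1)$-equivariant: if $r_v\in O(n+1)$ sends $v_0$ to $v$, the substitution $x=r_vy$ gives $|x^T v|=|y^T v_0|$ and $d_*x=d_*y$, so $(Af)(v)=(Af_v)(v_0)$. It therefore suffices to prove the identity at $v=v_0$. Under the decomposition (\ref{lazx}) the columns of $v_0$ form an orthonormal basis of the $\bbr^{n-k}$-factor, so from the bispherical representation (\ref{2.9}) one reads off $|x^T v_0|=|\zeta|\cos\theta=\cos\theta$. Passing to the normalized form $d_*x=c_{n,k}\sin^k\theta\cos^{n-k-1}\theta\, d_*\eta\, d_*\zeta\, d\theta$ and recognizing the inner $\eta$- and $\zeta$-integrations as $(M_{\cos\theta}f)(v_0)$ via (\ref{3.4}) yields
\begin{equation*}
(Af)(v_0)=c_{n,k}\intl_0^{\pi/2} a(\cos\theta)\,(M_{\cos\theta}f)(v_0)\,\sin^k\theta\,\cos^{n-k-1}\theta\, d\theta.
\end{equation*}
The substitution $\tau=\cos\theta$ turns $\sin^k\theta\cos^{n-k-1}\theta\, d\theta$ into $\rho(\tau)\, d\tau$ with $\rho(\tau)=(1-\tau^2)^{(k-1)/2}\tau^{n-k-1}$ (the sign from $d\tau=-\sin\theta\, d\theta$ is absorbed by flipping the limits), giving (\ref{3.11}).

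For (\ref{3.12}), I would argue by duality rather than repeating the bispherical decomposition on the Stiefel manifold, which is less transparent. Fubini applied directly to the defining integrals (\ref{3.1}) and (\ref{3.2}) yields
\begin{equation*}
\intl_{S^n}f(x)(A^*\vp)(x)\, d_*x=\intl_{V_{n+1,n-k}}(Af)(v)\vp(v)\, d_*v.
\end{equation*}
Inserting the already proved formula (\ref{3.11}) on the right and exchanging the $\tau$-integral with the $v$-integral produce
\begin{equation*}
\intl_{S^n}f(A^*\vp)\, d_*x=c_{n,k}\intl_0^1 a(\tau)\rho(\tau)\intl_{V_{n+1,n-k}}(M_\tau f)(v)\vp(v)\, d_*v\, d\tau.
\end{equation*}
Lemma \ref{Lemma 3.1} replaces the inner integral by $\intl_{S^n}f(x)(M^*_\tau\vp)(x)\, d_*x$, and letting $f$ range over a sufficiently rich test class yields (\ref{3.12}).

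The only real obstacle is the bookkeeping of absolute convergence needed to legitimize the various Fubini swaps, both in the bispherical decomposition and in the duality argument. This is supplied by the standing hypothesis that (\ref{3.1})--(\ref{3.2}) converge in the Lebesgue sense; since the change to bispherical coordinates preserves integrability of $|a(|x^Tv|)f(x)|$, and Lemma \ref{Lemma 3.1} is stated under the same Lebesgue-sense assumption, every exchange of integration order goes through without further work.
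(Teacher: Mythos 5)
Your derivation of (\ref{3.11}) is exactly the paper's: reduce to $v=v_0$ by equivariance and apply the bispherical decomposition (\ref{3.5}) (the paper compresses this to one sentence, but the computation is the one you carry out, and your identification $|x^Tv_0|=\cos\theta$ and the Jacobian bookkeeping are correct). For (\ref{3.12}) you genuinely diverge from the paper. The paper argues directly: it writes $(A^*\vp)(x)$ as an integral over $G=SO(n+1)$, averages over $K=SO(n)$ to produce an auxiliary function $\Omega$ on $S^n=G/K$ with $(A^*\vp)(x)=(A\Omega)(v_0)$, applies the already-proved (\ref{3.11}) to $\Omega$, and then identifies $(M_{\cos\theta}\Omega)(v_0)$ with $(M^*_\tau\vp)(x)$ from the definition (\ref{3.8}); this yields (\ref{3.12}) pointwise at each $x$ where the integral converges. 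You instead pair $A^*\vp$ against test functions $f$, use Fubini and Lemma \ref{Lemma 3.1}, and conclude by letting $f$ range over a separating class. Your route is shorter and avoids unwinding the group-theoretic definition of $M^*_\tau$, but it buys slightly less: it delivers (\ref{3.12}) only almost everywhere (or weakly), and the final density step, together with the interchange of the $\tau$- and $v$-integrals, needs a touch more than the stated hypothesis --- e.g.\ $f$ bounded, $\vp\in L^1(V_{n+1,n-k})$, and $\int_0^1|a(\tau)|\rho(\tau)\,d\tau<\infty$ (which is the absolute convergence of (\ref{3.1}) for $f\equiv 1$, a natural but not literal reading of the standing assumption), plus the bound $\|M_\tau f\|_{(\infty)}\le\|f\|_\infty$ from Lemma \ref{Lemma 3.3}. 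If you state those qualifications, your argument is a valid alternative proof; the paper's version is preferable only insofar as it is pointwise and self-contained within the group-integral formalism already set up for $M^*_\tau$.
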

\begin{proof}  Since $(Af)(v) = \int_{S^n} a(|x^T  v_0 |) f_v (x) d_*x$, (\ref{3.11}) follows from (\ref{3.5}). Furthermore,
\bea (A^*\vp)(x)\!\!&=&\!\!\!\intl_{V_{n+1, n-k}} \!\!\! a(|e_{n+1}^T v|) \vp_x (v) d_*v  \nonumber\\
 \label {3.14} &=&\intl_G \!a(|e_{n+1}^T gv_0| ) \vp_x (gv_0) dg, \qquad G \!=\! SO (n+1).\quad \eea
Replace $g$ by $ \delta g^{-1}$, $\delta\! \in \!K=SO(n)$, and integrate in $\delta$.  Then
$$(A^*\vp)(x) = \intl_G a(|(ge_{n+1})^T v_0|) \om(g) dg, \quad \om(g) = \intl_K \vp_x (\delta g^{-1} v_0) d\delta.$$
Since $\om (gK) = \om(g)$, one can write $\om(g) = \Omega (ge_{n+1})$, where $\Omega$ is a function on $S^n = G/K$.  Hence
\bea (A^*\vp)(x) &=&  \intl_{S^n} a(|y^T v_0 | ) \Omega (y) d_*y =  (A\Omega) (v_0)  \nonumber\\
 &=&c_{n, k} \intl^1_0  a(\tau) (M_{\tau} \Omega) (v_0) \,\rho (\tau) \,d\tau. \nonumber\eea
 Setting $\tau = \cos\,\theta$ and using (\ref{3.7}), we have
\bea (M_{{\rm cos}\,\theta} \Omega)(v_0) &=& \intl_{K'} \Omega (\gamma g_{k+1, n+1} (\theta) e_{n+1}) d\g  \nonumber\\
&=&\intl_{K'} \om(\gamma g_{k+1, n+1} (\theta)) d\g.\nonumber\eea
The last integral coincides with (\ref{3.8}), and the result follows.
\end{proof}

\subsection {Norm estimates}

In the following, $G = SO (n+1)$, $\| \cdot \|_{(p)}$ and $\| \cdot \|_{p}$ denote
the $L^p$-norms of functions on $V_{n+1, n-k}$ and $S^n$, respectively.

\begin{lemma}\label{Lemma 3.3}  For all $1\le p \le \infty$ and $\tau = \cos \,\theta \in [0, 1]$,
  \be\label {3.15} \|M_{\tau} f\|_{(p)} \le  \| f\|_p, \qquad \|M^*_{\tau} \vp\|_p \le  \| \vp \|_{(p)}. \ee
 \end{lemma}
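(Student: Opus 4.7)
The plan is to establish the first bound $\|M_\tau f\|_{(p)} \le \|f\|_p$ directly by viewing $(M_\tau f)(v)$ as an averaging operator against a probability measure, and then to deduce the dual estimate by an application of Lemma~\ref{Lemma 3.1}. Writing $v = gv_0$ with $g \in G = SO(n+1)$ and taking $r_v = g$, the defining formula (\ref{3.4}) becomes
\[
(M_\tau f)(gv_0) = \intl_{S^{n-k-1}} d_*\zeta \intl_{S^k} f\bigl(g(\eta \sin\theta + \zeta \cos\theta)\bigr)\, d_*\eta,
\]
i.e., the average of $f \circ g$ over the probability space $S^k \times S^{n-k-1}$. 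The case $p=\infty$ is then immediate. For $1 \le p < \infty$, I would use the identity $\|M_\tau f\|_{(p)}^p = \intl_G |(M_\tau f)(gv_0)|^p\, dg$ (since $d_*v$ is the pushforward of normalized Haar measure under $g \mapsto gv_0$) and apply Minkowski's integral inequality to obtain
\[
\|M_\tau f\|_{(p)} \le \intl_{S^{n-k-1}} d_*\zeta \intl_{S^k} \Bigl(\intl_G |f(gy_{\eta,\zeta})|^p\, dg\Bigr)^{1/p} d_*\eta,
\]
where $y_{\eta,\zeta} = \eta\sin\theta + \zeta\cos\theta \in S^n$. For any fixed $y \in S^n$, the identification $S^n = G/SO(n)$ gives $\intl_G |f(gy)|^p\, dg = \|f\|_p^p$, and since $d_*\eta$ and $d_*\zeta$ are probability measures, the first bound follows.

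For the dual estimate, I would combine Lemma~\ref{Lemma 3.1} with $L^p$--$L^{p'}$ duality on $S^n$ and H\"older's inequality:
\[
\|M^*_\tau \vp\|_p = \sup_{\|f\|_{p'}\le 1} \Bigl| \intl_{V_{n+1, n-k}} (M_\tau f)(v)\, \vp(v)\, d_*v \Bigr| \le \|\vp\|_{(p)} \sup_{\|f\|_{p'}\le 1} \|M_\tau f\|_{(p')},
\]
and the first part (with $p$ replaced by $p'$) gives $\|M^*_\tau \vp\|_p \le \|\vp\|_{(p)}$.

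The only non-trivial step is the pushforward identity $\intl_G |f(gy)|^p dg = \|f\|_p^p$, which holds because both sides define $G$-invariant probability integrals on $S^n$. A subsidiary point to check is the well-definedness of $(M_\tau f)(v)$ independent of the choice of $r_v$: any two choices differ by an element of $\mathrm{Stab}(v_0) = O(k+1)$ acting on $\mathbb{R}^{k+1}$, which is absorbed by a change of variables in $\eta \in S^k$. Both are minor verifications, and no deeper interpolation or harmonic analysis is required.
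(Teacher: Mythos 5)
Your proof is correct and follows essentially the same route as the paper: the first bound via Minkowski's integral inequality applied to $M_\tau f$ viewed as an average against a probability measure (the paper parameterizes this average over $K'=SO(n-k)\times SO(k+1)$ via (\ref{3.7}) rather than over $S^k\times S^{n-k-1}$, but the invariance argument is identical), and the second bound by duality through Lemma~\ref{Lemma 3.1}.
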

\begin{proof}   Let $F=M_{\tau} f$. By (\ref{3.7}),
\bea\| F\|_{(p)} &=& \| F(gv_0) \|_{L^p (G)}  \nonumber\\
&\le& \intl_{K'} \Big(\intl_G |f(g\g g_{k+1, n+1} (\theta) e_{n+1})|^p dg \Big)^{1/p} d\g  = \| f \|_p.\nonumber\eea
The second inequality in (\ref{3.15})  follows  by duality (\ref{3.10}).
\end{proof}

Operators (\ref{3.1}) and (\ref{3.2}) are represented as convolutions on $G$.  Specifically, setting
\[ \tilde f(g)= f(g e_{n+1}), \qquad \tilde a(g)= a(|e_{n+1}^T  g v_0|),\qquad g \in G,  \]
\[
\tilde \vp(\g)=\vp(\g v_0), \qquad  \tilde  a^* (\gam )= a (|(\gam e_{n+1})^T  v_0 |),\qquad \gam \in G, \]
 we have
 \[ (A f)(\gam v_0) \!= \!  \intl_G \! \tilde f (\g g^{-1}) \,\tilde a(g) dg,\quad
 (A^* \vp) (g e_{n+1} )  \!=  \!\intl_G  \!\tilde \vp (g \gam^{-1})\,\tilde  a^* (\gam) d\gam.\]

\begin{lemma}\label{Lemma 3.4} Let $ 1 \le p \le q \le \infty, \; \; 1 - p^{-1} + q^{-1} = r^{-1}$.  Then
 \[ \| A f\|_{(q)} \le c_{n,k}^{1/r}\, \| f \|_{p}\, \| a \|_{r, \rho}, \qquad  \|A^* \vp\|_{q} \le c_{n,k}^{1/r}\,\| \vp \|_{(p)}\, \| a \|_{r, \rho}, \]
where
 \[ \| a \|_{r, \rho} = \Big(\intl^1_0 |a(\tau) |^r\rho(\tau) d\tau\Big)^{1/r},\]
 $\rho(\tau)$ and $c_{n,k}$  being defined by  (\ref{3.13}).
\end{lemma}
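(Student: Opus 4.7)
The plan is to view both $A$ and $A^*$ as convolution operators on the compact group $G=SO(n+1)$, as already indicated in the displays immediately preceding the statement, and then to apply Young's inequality for convolutions on $G$ together with an explicit identification of $\|\tilde a\|_{L^r(G)}$ (and $\|\tilde a^*\|_{L^r(G)}$) with $c_{n,k}^{1/r}\|a\|_{r,\rho}$.

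First I would spell out the norm identifications. Since $G/K=S^n$ for $K=SO(n)$ and $G/H=V_{n+1,n-k}$ for $H$ the stabilizer of $v_0$, and since all measures are normalized to total mass one, lifting functions via $\tilde f(g)=f(ge_{n+1})$ and $\widetilde{Af}(\gamma):=(Af)(\gamma v_0)$ yields $\|\tilde f\|_{L^p(G)}=\|f\|_p$ and $\|\widetilde{Af}\|_{L^q(G)}=\|Af\|_{(q)}$, with analogous identities for $\tilde\vp$ and $\widetilde{A^*\vp}$. The displays preceding the statement then read as $\widetilde{Af}=\tilde f*\tilde a$ and $\widetilde{A^*\vp}=\tilde\vp*\tilde a^*$ in the standard convolution sense on $G$.

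The computational core is the identity $\|\tilde a\|_{L^r(G)}^r=c_{n,k}\|a\|_{r,\rho}^r$. I would derive it by applying Lemma \ref{Lemma 3.2} to the nonnegative kernel $|a|^r$ and the constant function $f\equiv 1$: since $(M_\tau 1)(v)\equiv 1$, the lemma gives
\[
\int_{S^n}|a(|x^T v|)|^r\,d_*x=c_{n,k}\int_0^1 |a(\tau)|^r\rho(\tau)\,d\tau=c_{n,k}\|a\|_{r,\rho}^r
\]
for every $v$. On the other hand, unfolding the definition of $\tilde a$ and substituting $g\mapsto g^{-1}$ (which preserves Haar measure on $G$ and, since $g$ is orthogonal, turns $|e_{n+1}^T g v_0|$ into $|(ge_{n+1})^T v_0|$) together with the identification $G/K=S^n$ produces
\[
\|\tilde a\|_{L^r(G)}^r=\int_G |a(|(ge_{n+1})^T v_0|)|^r\,dg=\int_{S^n}|a(|x^T v_0|)|^r\,d_*x,
\]
which matches. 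The same argument, with the roles of $v_0$ and $e_{n+1}$ swapped, handles $\|\tilde a^*\|_{L^r(G)}$.

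With these identifications in place, Young's inequality on the compact group $G$, valid under the hypothesis $1-p^{-1}+q^{-1}=r^{-1}$, yields
\[
\|Af\|_{(q)}=\|\tilde f*\tilde a\|_{L^q(G)}\le \|\tilde f\|_{L^p(G)}\|\tilde a\|_{L^r(G)}=c_{n,k}^{1/r}\|f\|_p\|a\|_{r,\rho},
\]
and the second bound follows identically from $\widetilde{A^*\vp}=\tilde\vp*\tilde a^*$ (alternatively, one can get it from the first bound via the duality $\langle Af,\vp\rangle=\langle f,A^*\vp\rangle$ encoded in Lemma \ref{Lemma 3.1}). The main obstacle is purely organizational: one must keep straight the two quotient identifications $S^n=G/K$ and $V_{n+1,n-k}=G/H$ so that the convolution formulas live at the level of $L^p(G)$, where Young's inequality applies without modification. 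Once the weighted-norm identity $\|\tilde a\|_{L^r(G)}^r=c_{n,k}\|a\|_{r,\rho}^r$ has been extracted from the bispherical Jacobian — which is exactly the content of Lemma \ref{Lemma 3.2} — the remaining steps are automatic.
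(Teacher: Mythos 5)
Your argument is correct and follows the same route as the paper: both reduce the bounds to Young's inequality for convolutions on $G=SO(n+1)$ after identifying $\|\tilde f\|_{L^p(G)}=\|f\|_p$, $\|\tilde\vp\|_{L^p(G)}=\|\vp\|_{(p)}$, and $\|\tilde a\|_{L^r(G)}=\|\tilde a^*\|_{L^r(G)}=c_{n,k}^{1/r}\|a\|_{r,\rho}$. The paper merely states these identifications without proof, whereas you correctly verify the weighted-norm identity via Lemma \ref{Lemma 3.2} applied to $|a|^r$ with $f\equiv 1$; no discrepancy.
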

\begin{proof}   The statement follows from Young's inequality on $G$ \cite[Chapter 5, Theorem 20.18]{HR} if we  notice that
\[\| \tilde f\|_{L^p (G)} =  \| f  \|_p, \qquad  \| \tilde \vp\|_{L^p (G)}=\| \vp \|_{(p)}, \]
\[ \| \tilde a \|_{L^r (G)} = \| \tilde  a^*\|_{L^r (G)} =  c_{n,k}^{1/r}\, \| a \|_{r, \rho}, \]
 \end{proof}

\subsection{The Funk-Hecke type theorems}
${}$

\noindent Below we introduce special orthonormal systems of functions on Stiefel and Grassmann manifolds.
 These systems are generated by spherical harmonics on $S^n$.  An analogue of the Funk-Hecke formula, leading to multiplier
 representation of the intertwining operators (3.1) and  (3.2), is obtained.

To start with, we evaluate bispherical means of spherical harmonics.
 By (\ref{3.2}), $(M_\tau Y_j) (v) = (M_\tau (Y_j)_v) (v_0)$. Let us
 decompose the function   $(Y_j)_v (x)= Y_j(r_vx)$ in the orthonormal basis  ${\mathscr U}=\{U^j_M(x)\}$ according to (\ref{2.12}). We obtain
\be\label {tag 4.2} Y_j(r_vx) = \suml_M \a^j_M(v) U^j_M(x), \quad \a^j_M (v) = \intl_{S^n} Y_j (r_v x) U^j_M(x) d_*x,\ee
$M=(r, \mu; s, \nu; m),\;  j = 2m+r+s$.  Thus,
\be\label {tag 4.3} (M_\tau Y_j) (v) \equiv (M_\tau (Y_j)_v) (v_0)  =\sum_M  (M_\tau U^j_M) (v_0) \,\a^j_M(v),\ee
where, by (\ref{3.4}) and (\ref{2.12}),
\bea
(M_\tau U^j_M) (v_0)&=& \varkappa_M (1-\tau^2)^{r/2} \,\tau^s \,R^{(\rho,\sigma)}_m (2\tau^2 -1) \nonumber\\
  &\times&\intl_{S^{n-k-1}} Y^{n-k-1}_{s,\nu}(\zeta) \,d_*\zeta \intl_{S^k} Y^k_{r,\mu} (\eta)\, d_*\eta.  \nonumber\eea
The last two integrals are zero, unless $s= 0$ and $r=0$.
 If  $r=0$ and $s=0$, we have
 \[\mu=1, \qquad \nu=1, \qquad M=(0,1;0,1; m),\]
 \be\label {tag 4.4} \rho=(k-1)/2, \qquad  \sigma=(n-k)/2 -1,\qquad j = 2m,\ee
and $\varkappa_M=\varkappa_j$, where
 \be\label {tag 4.9}
 \varkappa_j = \left [\frac{\sigma_n\,(2j+n-1) \; \Gamma \left(\displaystyle { j+k+1\over 2}\right)\;
 \Gamma  \left(\displaystyle { j+n-1\over 2}\right)}{\sigma_{n-k-1} \sigma_k\,\Gamma^2 \left(\displaystyle { k+1\over 2}\right)\;
 \Gamma  \left(\displaystyle { j\over 2} + 1\right) \; \Gamma \left(\displaystyle { j+n-k\over 2}\right)}\right ]^{1/2}\ee
(it is worth noting that if $k=n-1$, then (\ref{tag 4.9}) gives $\varkappa_j^2=d_n (j)$; cf. (\ref{2.8})).
Hence, for $\tau= \cos\,\theta$,
\[
(M_\tau U^j_M) (v_0)=\varkappa_j  R^{(\rho, \sigma)}_{j/2} (\cos 2\theta)= \varkappa_j R^{(\rho, \sigma)}_{j/2} (2\tau^2 -1)\]
and
\bea
 \a^j_M(v) &=&\varkappa_j\intl_{S^n} Y_j (r_v x) R^{(\rho, \sigma)}_{j/2} (2|x^T  v_0|^2-1)\, d_*x \nonumber\\
  &=&\varkappa_j\intl_{S^n} Y_j (x) R^{(\rho, \sigma)}_{j/2} (2|x^T  v|^2-1)\, d_*x. \nonumber \eea
Taking into account that the sum in (\ref{tag 4.3}) has only one term, we obtain

\[ (M_\tau Y_j) (v) =\varkappa_j^2  R^{(\rho,\sigma)}_{j/2} (2\tau^2 -1) \,\intl_{S^n} Y_j (x) R^{(\rho, \sigma)}_{j/2} (2|x^T v|^2-1)\, d_*x.\]
 For technical reasons, it is convenient to set
\be\label {tag 4.7} \hat Y_j (v)= \a_j\intl_{S^n} Y_j (x) R^{(\rho, \sigma)}_{j/2} (2|x^T   v|^2-1)\, d_*x, \quad \a_j=\varkappa_j \sqrt {d_n(j)} \ee
(the role of the coefficient $\a_j$ will be clarified later) and
\be\label {tag 4.7r} \hat m_\tau (j)= \frac{\varkappa_j} {\sqrt {d_n(j)}} \, R^{(\rho,\sigma)}_{j/2} (2\tau^2 -1).\ee

This gives the following statement.

\begin{theorem}\label{Theorem 4.1a}  Let $v\in V_{n+1, n-k}$, $1\le k \le n-1$.
For any spherical harmonic $Y_j (x)$ of degree $j$ on $S^n$ and any $\tau \in [0,1]$,
\be\label {tag 4.11m}
(M_\tau Y_j) (v)= \left\{ \begin{array} {ll} 0  &\hbox{ if} \; \; j \; \;\hbox{  is odd}, \\
 \hat m_\tau (j) \hat Y_j (v) \; \; &\hbox{if} \; \; j \; \; \hbox{ is even}.\\
\end{array}
\right.\ee
\end{theorem}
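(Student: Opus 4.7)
The approach is essentially dictated by the apparatus set up in Section 2.3: expand $Y_j$ after rotation in the bispherical orthonormal basis $\mathscr{U}=\{U^j_M\}$, apply $M_\tau$ term by term, and use the explicit product form of $U^j_M$ to see that almost every term vanishes.

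First I would use the $O(n+1)$-equivariance packaged into the definition $f_v(x)=f(r_v x)$ to write $(M_\tau Y_j)(v)=(M_\tau (Y_j)_v)(v_0)$, so that the problem reduces to computing the bispherical mean at the base frame $v_0$. Next, since $(Y_j)_v$ is again a spherical harmonic of degree $j$, I would expand it in the orthonormal basis $\mathscr{U}$ restricted to the degree-$j$ subspace:
\[
Y_j(r_v x)=\sum_M \alpha^j_M(v)\,U^j_M(x),\qquad \alpha^j_M(v)=\int_{S^n} Y_j(r_v x)\,U^j_M(x)\,d_*x,
\]
with $M=(r,\mu;s,\nu;m)$ subject to $j=2m+r+s$. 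This reduces the computation to evaluating $(M_\tau U^j_M)(v_0)$ for each $M$.

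The central observation, and the one that does all the work, is the factorization (\ref{2.12}) of $U^j_M$ in bispherical coordinates: inserting it into the definition (\ref{3.4}) of $M_\tau$ at $v=v_0$ and using the probability measures $d_*\eta$ on $S^k$ and $d_*\zeta$ on $S^{n-k-1}$, the angular integrals factor as
\[
\int_{S^k}Y^k_{r,\mu}(\eta)\,d_*\eta\;\cdot\;\int_{S^{n-k-1}}Y^{n-k-1}_{s,\nu}(\zeta)\,d_*\zeta,
\]
each of which vanishes whenever the corresponding spherical harmonic has positive degree. So the only indices $M$ that contribute are those with $r=s=0$ (and consequently $\mu=\nu=1$), leaving $j=2m$. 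This immediately gives the odd-$j$ case: for $j$ odd no such $M$ exists in the expansion, so every term vanishes and $(M_\tau Y_j)(v)\equiv 0$.

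For $j=2m$ even, only the single index $M=(0,1;0,1;m)$ survives, with parameters $\rho=(k-1)/2$, $\sigma=(n-k)/2-1$ and $\varkappa_M=\varkappa_j$ as in (\ref{tag 4.9}); the radial part contributes $R^{(\rho,\sigma)}_{j/2}(2\tau^2-1)$, whose value I simply read off from (\ref{2.12}) with $r=s=0$. Substituting back into the expansion and identifying
\[
\alpha^j_M(v)=\varkappa_j\int_{S^n}Y_j(x)\,R^{(\rho,\sigma)}_{j/2}(2|x^Tv|^2-1)\,d_*x
\]
(after the change of variable $x\mapsto r_v x$ inside the integral, using rotation invariance of $d_*x$ and of $|x^Tv|$ relative to $|x^Tv_0|$), I match the result against the definitions (\ref{tag 4.7}) of $\hat Y_j(v)$ and (\ref{tag 4.7r}) of $\hat m_\tau(j)$. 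The normalizing factor $\sqrt{d_n(j)}$ appears once in the numerator (via $\alpha_j$) and once in the denominator (via $\hat m_\tau(j)$), so it cancels against $\varkappa_j^2$, producing exactly $\hat m_\tau(j)\hat Y_j(v)$. The only subtle bookkeeping is the constant tracking through $\varkappa_M$, but this is mechanical; the real content is the orthogonality argument that collapses the sum to a single term.
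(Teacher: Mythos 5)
Your proposal is correct and follows essentially the same route as the paper's own argument: reduce to the base frame $v_0$, expand $(Y_j)_v$ in the bispherical basis $\mathscr{U}$, observe that the $S^k$- and $S^{n-k-1}$-integrals kill every term except the one with $r=s=0$ (hence none when $j$ is odd), and match the surviving term with the definitions (\ref{tag 4.7}) and (\ref{tag 4.7r}). The bookkeeping of $\varkappa_j$ and $\sqrt{d_n(j)}$ you describe is exactly how the paper arrives at $\hat m_\tau(j)\hat Y_j(v)$.
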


An analogue of (\ref{tag 4.11m}) for the operator (\ref{3.1}) follows from (\ref{3.11}).
\begin{theorem}\label{Theorem 4.1}  Let $x\in S^n$, $v\in V_{n+1, n-k}$, $1\le k \le n-1$. If
\be\label {cda} \intl^1_0\!   |a(\tau)|\, \rho (\tau)\, d\tau <\infty,  \qquad  \rho (\tau) = (1-\tau^2)^{(k-1)/2} \tau^{n-k-1},\ee
then for any spherical harmonic $Y_j (x)$ of degree $j$,
\be\label {tag 4.11} (AY_j) (v)\equiv \intl_{S^n} a(|x^T   v|) Y_j(x) d_*x =
\left\{ \begin{array} {ll} 0  &\!\!\hbox{ if} \;  j \hbox{  is odd}, \\
 \hat a(j) \hat Y_j (v) \; \; &\!\!\hbox{if} \;  j\hbox{ is even}, \\
\end{array}
\right.\ee
where
\be\label {tag 4.8} \hat a(j) \!= \!   c_{n, k,j}\intl^1_0\! \! R^{(\rho,\sigma)}_{j/2} (2\tau^2\! -\!1)  a(\tau)  \rho (\tau) d\tau,\quad
 c_{n, k,j}\! =\!  \frac{\varkappa_j\, \sigma_{n-k-1} \sigma_k } {\sigma_n\,\sqrt {d_n(j)}}.\ee
\end{theorem}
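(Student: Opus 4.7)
The plan is to derive Theorem \ref{Theorem 4.1} directly from Theorem \ref{Theorem 4.1a} together with the integral representation of $A$ given by Lemma \ref{Lemma 3.2}. Both ingredients are already in hand, so the argument reduces to substitution and a constant-matching calculation; no new analytic input is required.

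First I would verify absolute convergence, so that Lemma \ref{Lemma 3.2} legitimately applies with $f = Y_j$. Any spherical harmonic $Y_j$ is continuous on the compact sphere $S^n$ and hence bounded, and Lemma \ref{Lemma 3.3} yields $|(M_\tau Y_j)(v)| \le \|Y_j\|_\infty$ uniformly in $\tau \in [0,1]$. Combined with the integrability assumption (\ref{cda}) on $a$, this makes the right-hand side of (\ref{3.11}) absolutely convergent and allows the identity
\[
(AY_j)(v) \;=\; c_{n,k} \int_0^1 a(\tau)\, (M_\tau Y_j)(v)\, \rho(\tau)\, d\tau
\]
to be written without Fubini scruples.

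Next I would insert Theorem \ref{Theorem 4.1a}. If $j$ is odd, $(M_\tau Y_j)(v) \equiv 0$ in $\tau$, so the integral vanishes and $(AY_j)(v) = 0$. If $j$ is even, then $(M_\tau Y_j)(v) = \hat m_\tau(j)\,\hat Y_j(v)$; since $\hat Y_j(v)$ is independent of $\tau$ it factors outside the integral, leaving
\[
(AY_j)(v) \;=\; c_{n,k}\,\hat Y_j(v)\int_0^1 a(\tau)\, \hat m_\tau(j)\, \rho(\tau)\, d\tau.
\]
Substituting $\hat m_\tau(j) = \varkappa_j/\sqrt{d_n(j)}\; R^{(\rho,\sigma)}_{j/2}(2\tau^2 - 1)$ from (\ref{tag 4.7r}) and pulling the $\tau$-independent factor $\varkappa_j/\sqrt{d_n(j)}$ out of the integral, the prefactor collapses to
\[
c_{n,k}\cdot \frac{\varkappa_j}{\sqrt{d_n(j)}} \;=\; \frac{\varkappa_j\,\sigma_{n-k-1}\,\sigma_k}{\sigma_n\,\sqrt{d_n(j)}} \;=\; c_{n,k,j},
\]
matching the definition of $\hat a(j)$ in (\ref{tag 4.8}) exactly.

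The main obstacle here is not deep: it is the bookkeeping of the constants $\varkappa_j$, $d_n(j)$, $c_{n,k}$ and $c_{n,k,j}$, and the verification that absolute convergence holds so that both the appeal to Lemma \ref{Lemma 3.2} and the extraction of $\hat Y_j(v)$ from inside the $\tau$-integral are legitimate. Both are handled cleanly by the boundedness of $Y_j$ together with (\ref{cda}), so that Theorem \ref{Theorem 4.1} follows as essentially a corollary of the computation already performed en route to Theorem \ref{Theorem 4.1a}.
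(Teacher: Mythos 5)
Your proposal is correct and follows essentially the same route as the paper, which derives Theorem \ref{Theorem 4.1} from Theorem \ref{Theorem 4.1a} via the representation (\ref{3.11}) of Lemma \ref{Lemma 3.2}; your constant matching $c_{n,k}\,\varkappa_j/\sqrt{d_n(j)}=c_{n,k,j}$ is exactly what is needed. The only difference is that you spell out the absolute-convergence justification (via boundedness of $Y_j$ and (\ref{cda})), which the paper leaves implicit.
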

\begin{definition} \label {uyt} The function  $\hat Y_j (v)$,   generated by the spherical harmonic $Y_j (x)$ according to (\ref{tag 4.7}), will be called
 the {\it induced Stiefel harmonic}. Because $\hat Y_j (v)$ is right $O(n-k)$-invariant, it can be regarded as a harmonic on the space $\Xi$ of $k$-geodesics  or on the  Grassmannian $G_{n+1, k+1}$.
Following standard terminology in harmonic analysis, we call $\hat a(j)$ and  $\hat m_\tau (j)$ the {\it multipliers} of the respective operators  $A$ and $M_\tau$.
\end{definition}

As we shall see below,  the induced Stiefel harmonics have a number of remarkable properties, similar to those of usual spherical harmonics.

A routine calculation shows that in the case $k=n-1$, Theorems \ref{Theorem 4.1a} and \ref{Theorem 4.1}  agree with the
corresponding statements for spherical harmonics in \cite[Appendix]{Ru15}. In particular, Theorem  \ref{Theorem 4.1} resembles the
classical Funk-Hecke theorem; cf. \cite[Theorem A.34]{Ru15}.

\subsection{Properties of the induced Stiefel harmonics}
${}$

\noindent Let $ {\mathscr Y}=\{ Y^n_{j,\lam} (x) \}$ be an orthonormal basis of spherical harmonics in $L^2(S^n)$.  For $j$ even, we denote
\be\label {tag 4.17}  \hat Y_{j, \lam} (v) = \a_j \intl_{S^n}\! Y^n_{j, \lam} (x) R_{j/2}^{(\rho, \sigma )}(2|x^T v|^2 \!-\! 1) d_*x, \quad \a_j=\varkappa_j \sqrt {d_n(j)},\ee
 where, as in (\ref{tag 4.7}), $ \rho = (k-1)/2$, $ \sigma = (n-k)/2 -1$.

  \begin{lemma}\label{Lemma 4.3}  Let $j\in \{0,2,4,\ldots \}$. The following addition formula holds:
 \be\label {tag 4.18} R^{(\rho,\sigma)}_{j/2} (2|x^T  v|^2-1) = \a_j^{-1} \suml^{d_n(j)}_{\lam = 1} Y^n_{j, \lam} (x) \hat Y_{j, \lam} (v). \ee
\end{lemma}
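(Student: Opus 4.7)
The plan is to recognize the left-hand side, viewed as a function of $x\in S^n$ for fixed $v\in V_{n+1,n-k}$, as a spherical harmonic of degree $j$ on $S^n$, and then to expand it in the orthonormal basis $\{Y^n_{j,\lam}\}_\lam$ of the degree-$j$ subspace. Set $F_v(x):=R^{(\rho,\sig)}_{j/2}(2|x^T v|^2-1)$; the task is to identify $F_v$ as $\a_j^{-1}\suml_\lam \hat Y_{j,\lam}(v)\,Y^n_{j,\lam}(x)$.

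First I verify that $F_{v_0}$ is a spherical harmonic of degree $j$. Because $v_0=[0;I_{n-k}]$ extracts the last $n-k$ coordinates, the bispherical representation $x=\eta\sin\theta+\zeta\cos\theta$ yields $x^T v_0=\zeta\cos\theta$, hence $|x^T v_0|^2=\cos^2\theta$ and $2|x^T v_0|^2-1=\cos 2\theta$. Comparing with (2.12) in the case $r=s=0$, $j=2m$, gives
\[
F_{v_0}(x)=R^{(\rho,\sig)}_{j/2}(\cos 2\theta)=\vk_j^{-1}\,U^j_{M_0}(x),\qquad M_0=(0,1;0,1;j/2),
\]
which is precisely one member of the basis $\mathscr U$ and therefore a spherical harmonic of degree $j$ on $S^n$.

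Next, for arbitrary $v$ pick $r_v\in O(n+1)$ with $r_v v_0=v$; then $x^T v=(r_v^T x)^T v_0$, so $F_v(x)=F_{v_0}(r_v^T x)$. Since the subspace of spherical harmonics of degree $j$ is $O(n+1)$-invariant, $F_v(\cdot)$ is itself a spherical harmonic of degree $j$. Expanding in $\mathscr Y$ gives
\[
F_v(x)=\suml_{\lam=1}^{d_n(j)} c_\lam(v)\,Y^n_{j,\lam}(x),\qquad c_\lam(v)=\intl_{S^n} F_v(x)\,Y^n_{j,\lam}(x)\,d_*x,
\]
and the defining formula (4.17) reads exactly $c_\lam(v)=\a_j^{-1}\hat Y_{j,\lam}(v)$. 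Substituting produces (4.18).

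The only potentially delicate point is a bookkeeping one: the orthonormality (2.5) of $\mathscr Y$ is relative to the probability measure $d_*x$, so no stray $\sig_n$ enters the coefficients $c_\lam(v)$; and the coefficient $\vk_j^{-1}$ arising from the identification $F_{v_0}=\vk_j^{-1}U^j_{M_0}$ is absorbed into $\a_j=\vk_j\sqrt{d_n(j)}$. Beyond checking these normalizations, the argument is automatic once one sees that $F_v$ lies in the degree-$j$ harmonic subspace.
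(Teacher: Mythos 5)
Your proof is correct, and it takes a genuinely different route from the paper's. You observe that the kernel $F_v(x)=R^{(\rho,\sigma)}_{j/2}(2|x^T v|^2-1)$ is itself a spherical harmonic of degree $j$ in $x$: for $v=v_0$ it equals $\varkappa_j^{-1}U^j_{M_0}$ with $M_0=(0,1;0,1;j/2)$ in the bispherical basis (\ref{2.12}), and the general case follows because the degree-$j$ subspace is rotation invariant; hence the expansion of $F_v$ in $\{Y^n_{j,\lambda}\}_{\lambda}$ has coefficients given directly by the definition (\ref{tag 4.17}). The paper instead verifies the identity by matching Fourier-Laplace coefficients against \emph{all} basis harmonics $Y^n_{i,\mu}$: for $i=j$ the coefficient is read off from (\ref{tag 4.17}), while for $i\neq j$ it vanishes upon applying the Funk-Hecke type formula (\ref{tag 4.11}) with $a(|x^T v|)=R^{(\rho,\sigma)}_{j/2}(2|x^T v|^2-1)$ and invoking the orthogonality (\ref{ced}) of Jacobi polynomials. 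Your argument is shorter and makes the structural reason transparent (the kernel is a zonal-type element of the degree-$j$ subspace), at the cost of leaning on the facts that each $U^j_M$ is a degree-$j$ spherical harmonic and that the constant harmonics $Y^k_{0,1}$, $Y^{n-k-1}_{0,1}$ are $\equiv 1$ under the probability-measure normalization; the paper's computation reuses only the already established multiplier formula and Jacobi orthogonality. One cosmetic remark: the factor $\varkappa_j^{-1}$ in the identification $F_{v_0}=\varkappa_j^{-1}U^j_{M_0}$ plays no role in the final bookkeeping --- any nonzero multiple would do, since the coefficient $\a_j^{-1}$ in (\ref{tag 4.18}) comes solely from the normalization built into (\ref{tag 4.17}).
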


\begin{proof} Let us write (\ref{tag 4.18}) as $I(x)=J(x)$, assuming $v$ fixed.  The statement will be proved if we establish the coincidence of the Fourier-Laplace coefficients of $I(x)$ and $J(x)$, that is,
\[
\intl_{S^n} Y^n_{i, \mu} (x) I(x) d_*x= \intl_{S^n} Y^n_{i, \mu} (x) J(x) d_*x\]
 for any harmonic  $Y^n_{i, \mu} \in {\mathscr Y}$, not necessarily even.
Let us show that
  \be\label {tag 4.19} l.h.s.\equiv  \intl_{S^n} Y^n_{i, \mu} (x) I(x) d_*x = \left\{ \begin{array} {ll} \a_j^{-1}\, \hat Y_{j, \mu} (v) &\hbox{\rm if} \; \; i = j,\\
   0 &\; \; \hbox{\rm if} \; \; i \neq j.\\
\end{array}
\right.\ee
If $i = j$, this equality holds by definition (\ref{tag 4.17}) (set $\mu=\lam$). If  $i \neq j$ we write out (\ref{tag 4.11}) with $j$ replaced by $i$, that is,
  \be\label {tag 4.11new} (AY_i) (v)\equiv \intl_{S^n} a(|x^T  v|) Y_i(x) d_*x =
\left\{ \begin{array} {ll} 0  &\!\!\hbox{ if} \;  i \hbox{  is odd}, \\
 \hat a(i) \hat Y_i (v) \; \; &\!\!\hbox{if} \;  i\hbox{ is even}, \\
\end{array}
\right.\ee
 and then set
 $a(|x^T v|) = R^{(\rho,\sigma)}_{j/2} (2|x^T  v|^2-1)$, $Y_i =Y^n_{i, \mu}$. This gives
\[
 l.h.s.\equiv  \intl_{S^n} Y^n_{i, \mu} (x) I(x) dx= \hat a(i) Y^n_{i, \mu} (v)=0\]
  because, by (\ref{tag 4.8}),
 \bea \hat a(i) &=&  c_{n, k,i} \intl^1_0\!  (1-\tau^2)^{(k-1)/2}\,\tau^{n-k-1} \, R^{(\rho,\sigma)}_{i/2} (2\tau^2 -1) \,R^{(\rho, \sigma)}_{j/2}(2\tau^2-1)\,  d\tau\nonumber\\
&=&  c_{n, k,i}  \intl^1_{-1} (1-s)^\rho (1+s)^\sigma R^{(\rho,\sigma)}_{i/2} (s) R^{(\rho, \sigma)}_{j/2}(s) ds = 0 \nonumber\eea
  due to orthogonality of Jacobi polynomials; cf. (\ref{ced}).

For the right-hand side we have
\[
 r.h.s.\equiv \intl_{S^n} Y^n_{i, \mu} (x) J(x) d_*x =   \a_j^{-1} \suml^{d_n(j)}_{\nu = 1} \hat Y_{j, \lam} (v)   \intl_{S^n} Y^n_{i, \mu} (x) Y^n_{j, \lam} (x)) d_*x =0.\]
This completes the proof.
\end{proof}

Consider the set of all induced Stiefel harmonics (\ref{tag 4.17}) and recall the notation
$ \hat {\mathscr Y}=\{\hat Y_{j, \lam}:  \; j=0, 2,\ldots ;\quad \lam =1,2, \ldots d_n(j) \}$,
$d_n(j)$ being defined by (\ref{2.8}).

\begin{lemma}\label{Lemma 4.4}
The set
$ \hat {\mathscr Y}$ is orthonormal, that is,
 \be\label {tag 4.20}
 \intl_{V_{n+1, n-k}} \!\!\!\hat Y_{j, \lam} (v) \hat Y_{j', \lam'} (v) d_*v  =\left\{ \begin{array} {ll} 1
 &\;\;\hbox{\it if} \; \; j = j' \; \; \hbox{\it and} \; \; \lam = \lam',\\ 0 &\; \; \hbox{\it otherwise}.\\
\end{array}
\right.\ee
\end{lemma}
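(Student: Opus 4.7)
The strategy is to realize the correspondence $Y\mapsto \hat Y$ as an $SO(n+1)$-equivariant linear map between irreducible modules of spherical harmonics, and then apply Schur's lemma. Let $G=SO(n+1)$, write $\calh_j\subset L^2(S^n)$ for the span of $\{Y^n_{j,\lam}:\lam=1,\ldots,d_n(j)\}$, and extend (\ref{tag 4.17}) to the linear map
\[
\Phi_j\colon \calh_j \to L^2(V_{n+1,n-k}), \qquad (\Phi_j Y)(v)=\a_j\intl_{S^n} Y(x)\,R^{(\rho,\sigma)}_{j/2}(2|x^T v|^2-1)\,d_*x,
\]
so that $\Phi_j Y^n_{j,\lam}=\hat Y_{j,\lam}$. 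The identity $|x^T v|^2=|(g^{-1}x)^T(g^{-1}v)|^2$ for $g\in G$ shows, via a change of variable in the integral over $S^n$, that $\Phi_j$ intertwines the quasi-regular actions of $G$ on $L^2(S^n)$ and $L^2(V_{n+1,n-k})$.

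Each $\calh_j$ is irreducible as a $G$-module, and $\calh_j$, $\calh_{j'}$ are non-isomorphic for $j\ne j'$. Schur's lemma applied to the composition $\Phi_{j'}^*\Phi_j\colon \calh_j\to\calh_{j'}$ therefore forces $\Phi_{j'}^*\Phi_j=0$ for $j\ne j'$ and $\Phi_j^*\Phi_j=c_j\,\Id_{\calh_j}$ for some scalar $c_j\ge 0$. Unfolding the adjoint yields
\[
(\hat Y_{j,\lam},\hat Y_{j',\lam'})_{L^2(V_{n+1,n-k})} = c_j\,\delta_{j,j'}\,\delta_{\lam,\lam'},
\]
which settles the orthogonality part of (\ref{tag 4.20}) and reduces the lemma to showing that $c_j=1$.

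To compute $c_j$ I would pass to the trace: $c_j\,d_n(j)=\sum_\lam\|\hat Y_{j,\lam}\|_{L^2(V_{n+1,n-k})}^2$. Squaring the addition formula (\ref{tag 4.18}) and integrating in $x$ over $S^n$ (the cross-terms vanish by orthonormality of $\{Y^n_{j,\lam}\}$) gives the pointwise identity
\[
\suml_{\lam=1}^{d_n(j)}[\hat Y_{j,\lam}(v)]^2 = \a_j^2\intl_{S^n}\bigl[R^{(\rho,\sigma)}_{j/2}(2|x^T v|^2-1)\bigr]^2\,d_*x.
\]
The right-hand side is $G$-invariant in $v$, so it equals its value at $v=v_0$; by (\ref{2.12}) with $r=s=0$ and $m=j/2$ the integrand there is $\varkappa_j^{-2}\,|U^j_{(0,1;0,1;j/2)}(x)|^2$, and (\ref{loi}) gives $\int_{S^n}|U^j_{(0,1;0,1;j/2)}|^2\,d_*x=1$. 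Hence $\sum_\lam[\hat Y_{j,\lam}(v)]^2=\a_j^2/\varkappa_j^2=d_n(j)$ by the choice $\a_j=\varkappa_j\sqrt{d_n(j)}$ from (\ref{tag 4.7}); integrating in $v$ then produces $c_j\,d_n(j)=d_n(j)$, i.e., $c_j=1$.

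The main obstacle is conceptual rather than computational: one must recognize that the normalizations $\a_j$ and $\varkappa_j$ chosen in (\ref{tag 4.7}) and (\ref{2.13}) are calibrated precisely so that the Schur scalar comes out equal to $1$, and one must pair this with the observation that $\varkappa_j R^{(\rho,\sigma)}_{j/2}(2|x^T v_0|^2-1)$ is nothing but the zonal element $U^j_{(0,1;0,1;j/2)}$ of the orthonormal basis ${\mathscr U}$. Once these identifications are made, the verification of $G$-equivariance and the trace calculation are both routine.
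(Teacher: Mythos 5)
Your proof is correct, and it reaches the orthogonality part of (\ref{tag 4.20}) by a genuinely different (representation-theoretic) route, while the normalization $c_j=1$ ends up being a repackaging of the same computation the paper does. The paper writes the inner product as a double integral over $S^n\times S^n$ against the rotation-invariant kernel $\Omega_{j,j'}(x\cdot y)$ of (\ref{tag 4.22}) and applies the classical Funk--Hecke theorem (\ref{FH}); that is the concrete analytic incarnation of the Schur-lemma argument you give, since Funk--Hecke is precisely the statement that a rotation-commuting kernel operator acts as a scalar on each irreducible space $\mathcal{H}_j$. Your version makes the structure transparent (the maps $\Phi_j$ are isometries onto mutually orthogonal subspaces of $L^2(V_{n+1,n-k})$) at the cost of invoking irreducibility and pairwise non-isomorphism of the $\mathcal{H}_j$, which the paper gets for free from the orthonormality of ${\mathscr Y}$; one small point you should make explicit is that, since the $\mathcal{H}_j$ are \emph{real} modules, the conclusion $\Phi_j^*\Phi_j=c_j\,\Id$ should be drawn from self-adjointness (the eigenspaces of $\Phi_j^*\Phi_j$ are invariant subspaces), as the real endomorphism algebra need not a priori be $\bbr$. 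For the normalization, the paper identifies $\Omega_{j,j}(t)=(\a_j^2/\varkappa_j^2)P_j(t)$, evaluates at $x=y=e_{n+1}$ via (\ref{3.12}) with $\vp\equiv 1$, and computes the resulting Jacobi integral from (\ref{2.11}) and (\ref{tag 4.9}); you instead take the trace $\suml_\lam[\hat Y_{j,\lam}(v)]^2$, reduce to $v=v_0$ by transitivity, and recognize $\varkappa_j R^{(\rho,\sigma)}_{j/2}(2|x^Tv_0|^2-1)$ as the zonal basis element $U^j_{(0,1;0,1;j/2)}$ so that (\ref{loi}) finishes the job. Both normalizations rest on the same Jacobi norm (\ref{2.11}) --- the paper re-derives it, you reuse it through (\ref{loi}) --- and both arguments, like the paper's, legitimately use the addition formula (\ref{tag 4.18}), since Lemma \ref{Lemma 4.3} precedes this lemma. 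No gaps; the identifications you flag as the conceptual crux are exactly the right ones.
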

  \begin{proof}  We denote by $I$ the left-hand side of (\ref{tag 4.20}) and make use of (\ref{tag 4.17}).  Changing the order of integration, we obtain
    \be\label {tag 4.21} I = \intl_{S^n} Y^n_{j, \lam} (x) d_*x \intl_{S^n} Y^n_{j', \lam'} (y) \Omega_{j, j'} (x, y) d_*y, \ee
  \be\label {tag 4.22} \Omega_{j, j'} (x, y) \!=\! \a_j \a_{j'} \!\!\! \intl_{V_{n+1,n-k}} \!\!\!R^{(\rho, \sigma)}_{j/2} (2|x^T  v|^2 \!- \!1) R^{(\rho, \sigma)}_{j'/2} (2|y^T  v|^2 \!- \!1) d_*v.\ee
     By the rotation invariance, $\Omega_{j, j'} (x, y)$ is  a single-variable function of $x\cdot y$.
    Abusing notation, we set $ \Omega_{j, j'} (x, y) \equiv \Omega_{j, j'} (x\cdot y)$.  Then the Funk-Hecke formula (\ref{FH}) yields

        $$ I=\om_{j, j'} \intl_{S^n} Y_{j, \lam}(x) Y_{j', \lam'} (x) d_*x=\om_{j, j'}\, \del_{jj'}\,\del_{\lam \lam'},$$
 \be\label {tag 4.23} \om_{j, j'} = \frac{\sig_{n-1}}{\sig_n}   \intl^1_{-1} P_{j'} (t) \Omega_{j, j'}(t) (1-t^2)^{n/2-1} dt.\ee
 To complete the proof, we need to justify the equality
 \be\label {twsag} \om_{j, j} = 1.\ee
   First, let us show that
  \be\label {tag 4.24}
\Omega_{j, j} (t)= \frac{\a_j^2}{\varkappa_j^{2}}\, P_j (t).\ee
    By (\ref{tag 4.22})  and (\ref{tag 4.18}),
     \be\label {tag 4.25} \Omega_{j, j} (x\cdot y) = \a_j\suml^{d_n(j)}_{\lam = 1} Y^n_{j, \lam} (x) \intl_{V_{n+1, n-k}} \hat Y_{j, \lam} (v) R^{(\rho, \sigma)}_{j/2} (2|y^T  v|^2 - 1)d_*v.\ee
 It follows that $\Omega_{j, j} (x\cdot y)$ is a spherical harmonic of degree $j$ in the $x$-variable for each $y \in S^n$, and therefore,
 $\Omega_{j, j} (t)$ is a constant multiple of the spherical polynomial, i.e., $\Omega_{j, j} (t)= A_j P_j (t)$  $A_j =\const$; see, e.g., \cite [Lemma A.26]{Ru15}.

 To find $A_j$, we set $x = y = e_{n+1}$ and $  i = j$ in (\ref{tag 4.22}). This gives
 \[
 \Omega_{j, j} (1)= A_j P_j (1)=\a_j^2 \intl_{V_{n+1, n-k}} [R^{(\rho,\sigma)}_{j/2} (2|e_{n+1}^T  v|^2-1)]^2 d_*v.\]
 Taking into account that  $P_j (1)=1$ and using (\ref{3.12}) with $\vp=1$ and $ x=e_{n+1}$, we obtain
 \[ A_j =\a_j^2\, c_{n,k} \intl^1_0 (1\!-\!\tau^2)^{(k-1)/2} \tau^{n-k-1}[R^{(\rho,\sigma)}_{j/2} (2\tau^2\!-\!1)]^2  (M^*_\tau 1)(e_{n+1})\, d\tau,\]
where $(M^*_\tau 1)(e_{n+1}) = 1$, $c_{n,k}=\sigma_{n-k-1} \sigma_k/\sig_n$.
  Furthermore, from (\ref{2.11}) and (\ref{tag 4.9}) we have
\bea &&\intl^1_0 (1-\tau^2)^{(k-1)/2} \tau^{n-k-1}[R^{(\rho,\sigma)}_{j/2} (2\tau^2-1)]^2 \, d\tau\nonumber\\
&&=\frac{\Gamma^2 \left(\displaystyle { k+1\over 2}\right)\; \Gamma  \left(\displaystyle { j\over 2} + 1\right) \; \Gamma \left(\displaystyle { j+n-k\over 2}\right)}
{(2j+n-1) \; \Gamma \left(\displaystyle { j+k+1\over 2}\right)\; \Gamma  \left(\displaystyle { j+n-1\over 2}\right)}=   \frac{1}{\varkappa_j^{2}\, c_{n,k}}.\nonumber\eea
 Hence $A_j= \a_j^2/\varkappa_j^{2}$,  and  (\ref{tag 4.24}) follows.

Now (\ref{tag 4.23}) and (\ref{shap31})   yield
\[ \om_{j, j} = \frac{\sig_{n-1}\, \a_j^2}{\sig_n\, \varkappa_j^{2}} \,  \intl^1_{-1} [P_{j} (t)]^2  (1-t^2)^{n/2-1} dt=
\frac{ \a_j^2}{\varkappa_j^{2}\, d_n (j)}=1.\]
 This completes the proof.
  \end{proof}

 \begin{corollary}\label{Corollary 4.5}
 If the Stiefel harmonic $\hat Y_j $ is induced by the spherical harmonic $Y_j $ of even degree $j$, then
 \be\label {tag 4.27}  Y_{j} (x) = \a_j\!\! \intl_{V_{n+1, n-k}}\!\!\!\hat Y_{j} (v) R^{(\rho, \sigma)}_{j/2} (2|x^T  v|^2 \!-\! 1) d_*v, \quad \a_j=\varkappa_j \sqrt {d_n(j)}. \ee
 \end{corollary}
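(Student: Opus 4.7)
The plan is to derive this reproducing formula as a direct consequence of the addition formula (\ref{tag 4.18}) and the orthonormality of the induced Stiefel harmonics (\ref{tag 4.20}). The argument is essentially a Fourier synthesis matching a Fourier analysis, and I do not expect a serious obstacle, since all sums that appear are finite and the identities needed have just been proved.

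First I would fix an orthonormal basis $\{Y^n_{j,\lambda}\}_{\lambda=1}^{d_n(j)}$ of spherical harmonics of degree $j$ in $L^2(S^n)$. Since $Y_j$ is a spherical harmonic of (even) degree $j$, I can expand
\[
Y_j(x) = \sum_{\lambda=1}^{d_n(j)} c_\lambda\, Y^n_{j,\lambda}(x), \qquad c_\lambda = \int_{S^n} Y_j(x)\, Y^n_{j,\lambda}(x)\, d_*x.
\]
By the linearity of the defining integral (\ref{tag 4.7}), the induced Stiefel harmonic satisfies $\hat Y_j(v) = \sum_\lambda c_\lambda\, \hat Y^n_{j,\lambda}(v)$.

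Next I would substitute the addition formula of Lemma~\ref{Lemma 4.3} into the right-hand side of (\ref{tag 4.27}):
\[
\alpha_j\! \int_{V_{n+1,n-k}}\! \hat Y_j(v)\, R^{(\rho,\sigma)}_{j/2}(2|x^T v|^2-1)\, d_*v
= \sum_{\mu=1}^{d_n(j)} Y^n_{j,\mu}(x) \!\int_{V_{n+1,n-k}}\! \hat Y_j(v)\, \hat Y^n_{j,\mu}(v)\, d_*v,
\]
where the factor $\alpha_j \cdot \alpha_j^{-1} = 1$ cancels and the finite sum commutes with the integral. Expanding $\hat Y_j(v) = \sum_\lambda c_\lambda \hat Y^n_{j,\lambda}(v)$ inside the integral and invoking the orthonormality relation (\ref{tag 4.20}) collapses the double sum to $\sum_\mu c_\mu\, Y^n_{j,\mu}(x) = Y_j(x)$, which is precisely (\ref{tag 4.27}).

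The only point that requires care is that the addition formula (\ref{tag 4.18}) was stated for each fixed $v$ (or fixed $x$), but here we use it under an integral in $v$ for arbitrary fixed $x$; this is legitimate because both sides of (\ref{tag 4.18}) are polynomials in the entries of $v$ and continuous on $V_{n+1,n-k}$, so termwise integration against the finite probability measure $d_*v$ is justified. No analytical difficulty arises, and the corollary follows as the ``inverse'' side of the addition formula, playing the role analogous to the reconstruction of a spherical harmonic from its Funk--Hecke coefficients.
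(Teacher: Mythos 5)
Your proposal is correct and follows essentially the same route as the paper: both rest on substituting the addition formula of Lemma~\ref{Lemma 4.3} into the right-hand side of (\ref{tag 4.27}) and collapsing the resulting finite sum via the orthonormality relation (\ref{tag 4.20}), the only cosmetic difference being that the paper first reduces to basic harmonics $Y^n_{j,\lambda}$ while you carry the expansion coefficients $c_\lambda$ explicitly. No gap.
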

\begin{proof} By the addition formula (\ref{tag 4.18}), the right-hand side of (\ref{tag 4.27}) is a spherical harmonic of degree $j$. Hence
 it suffices to show that
\be\label {tag 4.27b}  Y^n_{j, \lam} (x) = \a_j \intl_{V_{n+1, n-k}}\hat Y_{j, \lam} (v) R^{(\rho, \sigma)}_{j/2} (2|x^T  v|^2 - 1) d_*v \ee
 for all basic  harmonics   $Y^n_{j,\lam} (x)$, $\;\lam \in \{1,2, \ldots, d_n(j)\}$. To this end,
  we represent $ R^{(\rho, \sigma)}_{j/2} (2|x^T  v|^2 - 1)$ by (\ref{tag 4.18}) and make use of the orthogonality (\ref{tag 4.20}). This gives
  \[ r.h.s.=  \suml^{d_n(j)}_{\lam' = 1} Y^n_{j, \lam'} (x)   \intl_{V_{n+1, n-k}}\!\!\!\hat Y_{j, \lam} (v) \hat Y_{j, \lam'} (v) d_*v= Y^n_{j, \lam} (x),\]
as desired.
  \end{proof}

\subsection{The Dual Statements}
${}$

\noindent The next Funk-Hecke type statement is  dual to Theorem \ref{Theorem 4.1}.

\begin{theorem}\label {Corollary 4.7} If the Stiefel harmonic $\hat Y_j $ is  induced by the spherical harmonic $Y_j $ of even degree $j$ and $a(\tau)$ satisfies (\ref{cda}),
then
\be\label {tag 4.28} \intl_{V_{n+1, n-k}} a(|x^T  v|)\, \hat Y_{j} (v)\, d_*v = \hat a(j)Y_{j} (x),\ee
$\hat a (j)$ being the multiplier (\ref{tag 4.8}).
\end{theorem}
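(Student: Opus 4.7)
The plan is to recognize the left-hand side as the dual operator $A^\ast$ from \eqref{3.2} applied to $\hat Y_j$, and then to compute its Fourier–Laplace coefficients against the orthonormal basis $\mathscr Y=\{Y^n_{i,\mu}\}$ of $L^2(S^n)$. Identifying these coefficients with those of $\hat a(j)\,Y_j(x)$ yields the equality in $L^2$, and since both sides are continuous, the pointwise equality follows.

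More precisely, set $\Psi(x)=\int_{V_{n+1, n-k}} a(|x^T v|)\,\hat Y_j(v)\,d_*v=(A^\ast\hat Y_j)(x)$. By Lemma \ref{Lemma 3.1} (applied to $f=Y^n_{i,\mu}$ and $\vp=\hat Y_j$, integrability being ensured by Lemma \ref{Lemma 3.4} and the hypothesis \eqref{cda} together with the boundedness of $\hat Y_j$ on the compact manifold $V_{n+1, n-k}$), one has
\[
\int_{S^n}\Psi(x)\,Y^n_{i,\mu}(x)\,d_*x=\int_{V_{n+1, n-k}}\hat Y_j(v)\,(AY^n_{i,\mu})(v)\,d_*v.
\]
Now Theorem \ref{Theorem 4.1} evaluates the inner factor: it vanishes when $i$ is odd, and equals $\hat a(i)\,\hat Y^n_{i,\mu}(v)$ when $i$ is even, where $\hat Y^n_{i,\mu}$ is the induced Stiefel harmonic associated with $Y^n_{i,\mu}$ via \eqref{tag 4.17}.

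Next, since the correspondence $Y_j\mapsto\hat Y_j$ given by \eqref{tag 4.7} is linear in $Y_j$, the expansion $Y_j=\sum_\lambda c_\lambda Y^n_{j,\lambda}$ entails $\hat Y_j=\sum_\lambda c_\lambda \hat Y^n_{j,\lambda}$. Combining this with the orthonormality relation \eqref{tag 4.20} from Lemma \ref{Lemma 4.4}, for any even $i$ one obtains
\[
\int_{V_{n+1, n-k}}\hat Y_j(v)\,\hat Y^n_{i,\mu}(v)\,d_*v
=\sum_\lambda c_\lambda\,\delta_{ij}\,\delta_{\lambda\mu}=c_\mu\,\delta_{ij}.
\]
Therefore the Fourier–Laplace coefficient of $\Psi$ with respect to $Y^n_{i,\mu}$ equals $\hat a(j)\,c_\mu$ when $i=j$ and vanishes otherwise (including for every odd $i$). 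Reassembling the series gives
\[
\Psi(x)=\sum_{\mu=1}^{d_n(j)}\hat a(j)\,c_\mu\,Y^n_{j,\mu}(x)=\hat a(j)\,Y_j(x),
\]
which is \eqref{tag 4.28}.

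There is no real obstacle here: the proof is essentially a bookkeeping argument combining the direct Funk–Hecke-type formula (Theorem \ref{Theorem 4.1}), the duality identity \eqref{3.10}, and the orthonormality of the induced Stiefel harmonics (Lemma \ref{Lemma 4.4}). The one point that deserves a brief justification is the use of Fubini/duality, which is legitimate because, under \eqref{cda}, the kernel $a(|x^T v|)$ is absolutely integrable on $S^n\times V_{n+1, n-k}$ (a consequence of the bispherical decomposition \eqref{3.5}), while $\hat Y_j$ and $Y^n_{i,\mu}$ are continuous, hence bounded.
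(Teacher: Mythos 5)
Your argument is correct and follows essentially the same route as the paper: reduce to the basis harmonics (the paper does this via ``it suffices to show'' on $Y^n_{j,\lambda}$, you via the linearity of $Y_j\mapsto\hat Y_j$), then match Fourier--Laplace coefficients using the Funk--Hecke type formula of Theorem \ref{Theorem 4.1} together with the orthonormality relation \eqref{tag 4.20}, with Fubini justified by \eqref{cda}. Your version is slightly more explicit about the interchange of integration and the reassembly of the series, but the substance is identical.
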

\begin{proof}  As above, it suffices to show that
\be\label {tag 4.28y} \intl_{V_{n+1, n-k}} a(|x^T  v|) \hat Y_{j,\lam} (v) d_*v = \hat a(j)Y^n_{j, \lam} (x),\ee
for all basic harmonics   $Y^n_{j,\lam} (x)$, $\;\lam \in \{1,2, \ldots, d_n(j)\}$.
To prove (\ref{tag 4.28y}), we evaluate the Fourier-Laplace coefficients of both sides.  Let  $Y^n_{i, \mu}$ be an arbitrary spherical harmonic belonging to the orthonormal basis ${\mathscr Y}$ of $L^2 (S^n)$.
 Changing the order of integration, owing to (\ref{tag 4.11}) and (\ref{tag 4.20}),  we have
$$\intl_{S^n} Y^n_{i, \mu} (x) d_*x \intl_{V_{n+1, n-k}} a (|x' v|)\hat Y_{j, \lam} (v) d_*v = \hat a (i) \delta_{ij} \delta_{\mu\lam}. $$
Since the right-hand side has the same Fourier-Laplace coefficients, the result follows.
\end{proof}

The following theorem is dual to Theorem \ref {Theorem 4.1a}.
\begin{theorem} \label{gtr}  If the Stiefel harmonic $\hat Y_j (v)$ is  induced by the spherical harmonic $Y_j (x)$ of even degree $j$, then
\be\label {tagsq}
(M^*_\tau \hat Y_j)(x)=  \hat m_\tau(j) \,Y_j (x),\ee
where $\hat m_\tau(j)$ is the multiplier (\ref{tag 4.7r}).
\end{theorem}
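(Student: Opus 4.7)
The plan is to dualize Theorem \ref{Theorem 4.1a} via the duality relation (\ref{3.10}) and then read off the Fourier--Laplace coefficients of $(M^*_\tau \hat Y_j)(x)$ with respect to the orthonormal basis $\mathscr{Y}=\{Y^n_{i,\mu}\}$ of $L^2(S^n)$. Since $\hat Y_j$ is a finite linear combination of the basic induced harmonics $\hat Y^n_{j,\lam}$, it lies in $L^2(V_{n+1,n-k})$; hence, by Lemma \ref{Lemma 3.3}, $(M^*_\tau \hat Y_j)(x)$ lies in $L^2(S^n)$, so it is determined by its Fourier--Laplace coefficients.

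Concretely, fix $\tau\in[0,1]$ and let $Y^n_{i,\mu}$ be an arbitrary element of $\mathscr{Y}$. By Lemma \ref{Lemma 3.1},
\[
\intl_{S^n} Y^n_{i,\mu}(x)\,(M^*_\tau \hat Y_j)(x)\,d_*x
=\intl_{V_{n+1,n-k}} (M_\tau Y^n_{i,\mu})(v)\,\hat Y_j(v)\,d_*v.
\]
Theorem \ref{Theorem 4.1a} immediately shows that the right-hand side vanishes when $i$ is odd, and equals $\hat m_\tau(i)\int_{V_{n+1,n-k}} \hat Y^n_{i,\mu}(v)\,\hat Y_j(v)\,d_*v$ when $i$ is even. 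Writing the expansion $Y_j=\sum_\lam c_\lam Y^n_{j,\lam}$ in $\mathscr{Y}$, linearity of the induction map (\ref{tag 4.17}) gives $\hat Y_j=\sum_\lam c_\lam \hat Y^n_{j,\lam}$, and the orthonormality relation (\ref{tag 4.20}) of Lemma \ref{Lemma 4.4} collapses the inner product to $c_\mu\,\delta_{ij}$. Thus the Fourier--Laplace coefficient equals $\hat m_\tau(j)\,c_\mu$ when $i=j$ and vanishes otherwise, which are precisely the coefficients of $\hat m_\tau(j)\,Y_j(x)$.

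Because the two $L^2(S^n)$ functions $(M^*_\tau \hat Y_j)(x)$ and $\hat m_\tau(j)\,Y_j(x)$ have identical Fourier--Laplace coefficients against every element of the complete orthonormal system $\mathscr{Y}$, they coincide almost everywhere, proving (\ref{tagsq}). There is no significant obstacle here, since all the work was done in proving Theorem \ref{Theorem 4.1a} and Lemma \ref{Lemma 4.4}; the only minor point requiring care is the $L^2$ membership (handled by Lemma \ref{Lemma 3.3}) that legitimizes the comparison of Fourier coefficients.
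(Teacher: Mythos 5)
Your proposal is correct and follows essentially the same route as the paper: dualize $M_\tau$ against $M^*_\tau$ via Lemma \ref{Lemma 3.1}, apply Theorem \ref{Theorem 4.1a} and the orthonormality of Lemma \ref{Lemma 4.4}, and match Fourier--Laplace coefficients against the basis $\mathscr Y$. The only (harmless) differences are that you expand a general $Y_j$ in the basis rather than first reducing to the basic harmonics $Y^n_{j,\lam}$, and that you explicitly record the $L^2$ justification for the coefficient comparison, which the paper leaves implicit.
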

\begin{proof}
It suffices to prove (\ref{tagsq}) on basic  harmonics, that is,
\be\label {tagsq1}
(M^*_\tau \hat Y_{j,\lam})(x)= \hat m_\tau(j)\, Y_{j,\lam} (x),\qquad Y^n_{j,\lam} \in {\mathscr Y}.  \ee
 For any spherical harmonic $Y^n_{j',\lam'} \in {\mathscr Y} $, owing to (\ref{3.10}), (\ref{tag 4.11m}) and (\ref {Lemma 4.4}), we have
\bea
&& \intl_{S^n} Y^n_{j',\lam'}(x) (M^*_\tau \hat Y_{j,\lam})(x)\, d_*x=\intl_{V_{n+1, n-k}} \hat Y_{j,\lam} (v) (M_\tau  Y_{j',\lam'})(v)\,  d_*v \nonumber\\
&&=\hat m_\tau(j) \intl_{V_{n+1, n-k}} \hat Y_{j,\lam} (v)\hat Y_{j',\lam'} (v)\, d_*v = \hat m_\tau(j) \delta_{jj'} \delta_{\lam \lam'}.\nonumber\eea
Similarly, for the right-hand side of (\ref{tagsq1})   we have
\[
\hat m_\tau(j) \intl_{S^n} Y^n_{j',\lam'}(x) Y^n_{j,\lam} (x) d_*x=\hat m_\tau(j) \delta_{jj'} \delta_{\lam \lam'}.\]
Thus all the Fourier-Laplace coefficients of the both sides of (\ref{tagsq1}) coincide, and the result follows.
\end{proof}

\subsection{Examples}\label{Examples}

\begin{example}\label {Example 4.9.}  Consider the Funk-Radon transforms
\[(Rf)(S^n \cap v^\perp)= (M_0 f)(v), \qquad  (R^* \vp)(x) = (M^*_0 \vp)(x), \]
where  $v\in V_{n+1, n-k}$, $x\in S^n$; cf. (\ref {cdfae}).
 Setting $\tau=0$ in (\ref{tag 4.11m}) and (\ref{tagsq}), we compute the multiplier  $\hat m_j = \hat m_0 (j)$ of these  operators. Specifically, by (\ref{tag 4.7r}),
 \[ \hat m_j = \frac{\varkappa_j} {\sqrt {d_n(j)}} \, R^{(\rho,\sigma)}_{j/2} (-1).\]
Here, by (\ref{2.10}),
\bea
R^{(\rho,\sigma)}_{j/2} (-1)&=& \frac{P^{(\rho, \sigma)}_{j/2} (-1)}{P_{j/2}^{(\rho,\sigma)} (1)}= \frac{ (-1)^{j/2} P^{(\sigma, \rho)}_{j/2} (1)}{P_{j/2}^{(\rho,\sigma)} (1)}\nonumber\\
&=& (-1)^{j/2} \frac{\Gamma \left(\displaystyle {k+1\over 2}\right)\; \Gamma \left(\displaystyle { j+n-k\over 2}\right)}
{ \Gamma \left(\displaystyle { n-k\over 2}\right) \;\Gamma \left(\displaystyle { j+k+1\over 2}\right)\; }.\nonumber\eea
Hence, by (\ref{tag 4.9}) and (\ref{2.8}), a simple calculation yields
\be\label {tedm2} \hat m_j = (-1)^j \del_{n,k}  \left [ \frac{\Gamma \left(\displaystyle {j+n-k\over 2}\right)\; \Gamma  \left(\displaystyle { j+1\over 2}\right)}
{\Gamma  \left(\displaystyle { j+n\over 2} \right) \; \Gamma \left(\displaystyle { j+k +1\over 2}\right)}\right ]^{1/2}\!\!\!,\qquad \quad\ee
\[
\del_{n,k}= \left [\frac{\Gamma \left(\displaystyle {k +1 \over 2}\right)\; \Gamma  \left(\displaystyle { n\over 2}\right)}
{\Gamma  \left(\displaystyle {n -k\over 2} \right) \; \pi^{1/2}}\right ]^{1/2}.\]

In particular, if $k=n-1$, then
\[
\hat m_j = (-1)^{j/2} \frac {\Gamma  \left(\displaystyle { n\over 2}\right)\, \Gamma  \left(\displaystyle { j+1\over 2}\right)}{ \pi^{1/2} \Gamma  \left(\displaystyle { j+n\over 2}\right)}.\]
This expression agrees with the known Fourier-Laplace multiplier of the Funk transform on $S^{n-1}$; cf.  \cite[formula (5.1.3)]{Ru15}.
\end{example}

\begin{example}\label {Example 4.9hh.}  The intertwining operator (\ref{3.1}) with the kernel $a(t) = t^{\a-n+k}, \;  Re \,\a> 0$, is a constant
multiple of the generalized cosine transform in integral geometry \cite{Ru02b, Ru08}. Let
\bea \label {3.1f} (A_\a f)(v) &=& \intl_{S^n} |x^T  v|^{\a-n+k} f(x) \,d_*x, \qquad v \in V_{n+1, n-k},\\
\label {3.2f}({A_\a ^*}\vp)(x) &=& \intl_{V_{n+1, n-k}} \!\!\! |x^T  v|^{\a-n+k}  \vp (v) \,d_*v, \qquad x \in S^n.\eea

By (\ref{tag 4.8}), the multiplier of these operators  is
\[ \hat a_\a(j) =
c_{n, k,j}\intl^1_0\!  (1-\tau^2)^{(k-1)/2}\,\tau^{\a-1} \, R^{(\rho,\sigma)}_{j/2} (2\tau^2 -1) \, d\tau.\]
This integral can be  evaluated using  \cite[2.22.2(9)]{PBM} and the properties of Jacobi polynomials (we skip the routine calculations). The result is
\be\label {tag 4.29} \hat a_\a(j) = (-1)^{j/2} \del_{\a,n,k} \,\hat m_\a (j),\ee
where
 \[ \del_{\a,n,k}= \left [\frac{\sig_{n-k-1} \sig_k \,\Gam (n)}{2^n \sig_n}\right ]^{1/2} \frac{\Gamma (\a/2)}{\Gamma \left((n-k-\a)/2\right)},\]
\[\hat m_\a (j)=\left [ \frac{\Gamma\left(\displaystyle{ j+k+1 \over 2} \right) \, \Gamma \left(\displaystyle{j+1\over 2} \right)}
 {\Gamma \left(\displaystyle{j+n-k\over 2}\right) \, \Gamma \left(\displaystyle {n+j\over 2} \right)} \right]^{1/2}
 \, \frac {\Gamma\left(\displaystyle{ j+n-k-\a \over 2} \right)}{\Gamma\left(\displaystyle{ j+k+\a +1\over 2} \right)}.\]

In the case $k=n-1$, this expression agrees (up to notation) with the known Fourier-Laplace multiplier of the $\a$-cosine transform on $S^{n-1}$; cf.  \cite[formula (5.1.9)]{Ru15}.
If $j \to \infty$, then $\hat a_\a(j)=O(j^{-k/2 -\a})$.
\end{example}

\subsection{Proof of Theorem \ref{inje}}

${}$

\noindent {\rm (i)} Let $\tau =\sin t$. By (\ref{cdf}), it suffices to prove injectivity of the mapping
\[L^1_{even} (S^n) \ni f \longrightarrow M_\tau f \in L^1 (V_{n+1, n-k}).\]
Denote $\mathcal{P}_j(\tau)=R^{(\rho,\sigma)}_{j/2} (2\tau^2-1)$ and suppose that $M_\tau$ is injective, i.e.,  $M_\tau f = 0$ implies $f =0$ a.e. for every  $f\in L^1_{even}(S^n)$. Assuming the contrary,
that is, $\mathcal{P}_j(\tau)= 0$ for some $j=j_0 \in \{0,2,4,\ldots \}$, we obtain  that $M_\tau Y_{j_0}=c \,  \mathcal{P}_{j_0}(\tau) \hat Y_{j_0} \equiv 0$ for
every spherical harmonic $Y_{j_0}$ on $S^n$. Hence, by the injectivity assumption,  $Y_{j_0} \equiv 0$, which gives a contradiction.

Conversely, suppose that $\mathcal{P}_j(\tau)\neq 0$ for all $j\in \{0,2,4, \ldots\}$, and let  $M_\tau f = 0$ for some $f\in L^1_{even}(S^n)$. Then
for every spherical harmonic $Y_j$ on $S^n$ with $j$ even and the corresponding Stiefel harmonic $\hat Y_j$, owing to (\ref{3.10})   and (\ref{tagsq}), we obtain
\bea 0&=&\intl_{V_{n+1, n-k}} (M_\tau f)(v) \hat Y_j (v) d_*v = \intl_{S^n} f(x) (M^*_\tau \hat Y_j) (x)  d_*x \nonumber\\
\label {kaq}&=&c\, \mathcal{P}_j(\tau) \intl_{S^n} f(x) Y_j (x) d_*x.\eea
 Because $\mathcal{P}_j(\tau)\neq 0$, it follows that all the Fourier-Laplace coefficients of $f$ are zero. Hence $f(x)=0$ for almost all $x\in S^n$ (use. e.g., \cite[Proposition A.18]{Ru15}).
 Now the statement of Theorem \ref{inje} follows if we set
\be\label {tre} R^{(\rho,\sigma)}_{j/2} (2\tau^2-1) = R^{(\rho,\sigma)}_{j/2} (2\sin^2 t-1)= (-1)^{j/2}  R^{(\sigma,\rho)}_{j/2} (\cos\, 2t).\ee

${}$
\noindent {\rm (ii)}  More generally,   let $\tau_i =\sin t_i$. Then $R_{t_i} f =M_{\tau_i} f$ and the system ({\ref{sol}) is equivalent to
 \be\label {solp}
 \mathcal{P}_j(\tau_i)= 0, \qquad  i=1,2,\ldots, \ell. \ee
 If these equations have no common  solution in the $j$-variable, then for any $j\in \{0,2,4,\ldots \}$  there exists
  at least one $i=i (j)$ such that
  $\mathcal{P}_j(\tau_{i(j)})\neq 0$. If $M_{\tau_i} f=0$ for all $i=1,2,\ldots, \ell$, then, in particular, $M_{\tau_{i(j)}} f=0$ and, as in (\ref{kaq}),
  \[ \mathcal{P}_j(\tau_{i(j)}) \intl_{S^n} f(x) Y_j (x) d_*x =0.\]
 This implies $\int_{S^n} f(x) Y_j (x) d_*x =0$. Because $j\in \{0,2,4,\ldots \}$ is arbitrary, it follows that $f=0$ a.e. on $S^n$.

  Conversely, if the equations (\ref {solp}) have a common solution, say, $j=j_0$, then, as above, for any spherical harmonic $Y_{j_0}$ and all $i=1,2,\ldots, \ell$, we have
  \[ M_{\tau_i} Y_{j_0}=c \,  \mathcal{P}_{j_0}(\tau_i) \hat Y_{j_0} \equiv 0.\]
  This completes the proof  if we take into account (\ref {tre}).
 \hfill $\Box$.

\section {Conclusion}\label {Conclusion}

Some comments are in order.

\noindent {\bf 1.} The purpose of the paper was two-fold.
On the one hand, it would be interesting to investigate  injectivity of the shifted Radon  transforms on  an arbitrary constant curvature space $X$. This setting of the problem extends the well known consideration of spherical means with center at a point to the case of a `multidimensional center'.
To start with,   we restricted to the  case $X=S^n$ and  obtained necessary and sufficient conditions of injectivity of the shifted Funk-Radon transform on $L^1(S^n)$.  The cases, when $X$ is the Euclidean or hyperbolic space,  are left for the future.

On the other hand, our study needs a suitable harmonic analysis, which makes a bridge between functions on the sphere and functions on the Stiefel (or Grassmann) manifolds.
  This analysis is of independent interest and has many  aspects.  We considered only some of them, which are related to
 the  induced  orthonormal systems,  the corresponding Funk-Hecke type theorems, the addition formula, and multipliers.

 It is natural to
 conjecture that our consideration paves the way to further investigations. The corresponding  theory for translation invariant linear operators in $\bbr^n$ is well known; see, e.g., H\"{o}rmander \cite {Ho},   Stein and   Weiss \cite{SW}, Grafakos \cite{Graf}. Operators on the unit sphere commuting with rotations  were studied by  Coifman and  Weiss \cite{CoW},  Dunkl \cite{Du},
 Rubin \cite[Sections A.10 - A.13]{Ru15},  Samko \cite{Sa}, to mention a few.

\vskip 0.2 truecm

\noindent {\bf 2.}   Our formula (\ref {tedm2}) for the multiplier of the Funk-Radon transform differs from that suggested by Strichartz \cite{Str1, Str3}. Nevertheless, multipliers in (\ref {tedm2}) and in \cite{Str1} have the same order $O(j^{-k/2})$ as $j \to \infty$. Strichartz's approach  relies on his previous group-theoretic considerations in \cite{Str2};
cf. formulas (4.2) and (4.4) in  \cite{Str1}, where  the multipliers of the Funk-Radon transform and its dual have different analytic expression.
Our approach is essentially different, self-contained,  and  invokes Jacobi polynomials. It is applicable to more general intertwining operators, yields the relevant
Funk-Hecke type theorems, and the addition formula for the corresponding harmonics. Moreover, unlike \cite{Str1}, our  multipliers for the  intertwining operators and the dual ones are the same.

\vskip 0.2 truecm

\noindent {\bf 3.}  According to Strichartz \cite[Theorem 4.2] {Str3},
 the asymptotics  $O(j^{-k/2})$ of the  multiplier corresponding to the Funk-Radon transform $R$ and combined with the oscillating factor $(-1)^{j/2}$,  yields $L^p$-$L^q$ estimates of $R$.
 However, there is an intimate connection   between $R$ and the $k$-plane transform on $\bbr^n$  (see, e.g., \cite[Section 3]{Ru19}), which allows one to convert boundedness results for one class of operators to the similar results for another.  This conversion is performed with preservation of the operator norms and yields important geometric inequalities for sections of convex bodies in integral geometry.

  It might be of interest to convert  Strichartz's estimates from \cite[Theorem 4.2] {Str3} to those for the $k$-plane transforms and  compare the obtained statements with known results  by
  Christ \cite{Chr84} and Drury \cite{Dr89}.   We conjecture that this approach is applicable to  more general analytic family of operators (\ref{3.1f}) with the
   oscillatory multiplier (\ref{tag 4.29}), having the order   $O(j^{-k/2 -\a})$.

\vskip 0.2 truecm

\noindent {\bf 4.} Natural higher-rank generalizations of the spherical means and the corresponding shifted Radon transforms
arise in  integral geometry on Grassmann manifolds and matrix spaces \cite {GR, OR08}. In this setting,  an analogue of the shift $t$ is
matrix-valued and represented by  a positive definite  matrix.  It might be of interest to study injectivity of such higher-rank  mean value operators and develop the relevant harmonic analysis.

\vskip 0.2 truecm

\noindent {\bf Acknowledgement.} The author is grateful to Professor Mark Agranovsky for helpful inspiring discussions and sharing his knowledge of the subject.

\end{document}